\newtheorem{theorem}{Theorem}
\newtheorem{proposition}[theorem]{Proposition}%
\newtheorem{corollary}[theorem]{Corollary}
\newtheorem{lemma}[theorem]{Lemma}
\newtheorem{remark}[theorem]{Remark}%
\def\rin{\rotatebox[origin=c]{90}{$\in$}}
\begin{document}

\title[Geometric properties of Blaschke-like maps]
      {Geometric properties of Blaschke-like maps
       on domains with a conic boundary}

\author{Masayo Fujimura}
\address{Department of Mathematics, National Defense Academy of Japan, Japan}
\email{masayo@nda.ac.jp}

\author{Yasuhiro Gotoh}
\address{Department of Mathematics, National Defense Academy of Japan, Japan}
\email{gotoh@nda.ac.jp}

\begin{abstract}
For a circle $ C $ contained in the unit disk,
the necessary and sufficient condition for the existence of a triangle
inscribed in the unit circle 
and circumscribed about $ C $ is known as Chapple's formula.
The geometric properties of Blaschke products of degree 3
given by Daepp et al. (2002) and Frantz (2004)
allow us to extend Chapple's formula to the case of 
ellipses in the unit disk.
The main aim of this paper is to provide a further extension 
of Chapple's formula.
Introducing a Blaschke-like map of a domain whose boundary 
is a conic, we extend their results to the case where
the outer curve is an ellipse or a parabola. 
Moreover, we also give some geometrical properties for
the Blaschke-like maps of degree $ d $.
\end{abstract}

\keywords{Conformal deformation, Blaschke product, Conic}
\subjclass[2010]{30C20, 30J10}

\maketitle


\section{Introduction}
For a circle $ C:  \vert z-c \vert =r $ contained in  
the unit disk $ \mathbb{D} $, 
there exists a triangle inscribed in the unit circle 
and circumscribed about $ C $ if and only if $ C $ satisfies 
$ \vert c \vert^2=1-2r $.
This result was independently 
given by Chapple \cite{chapple} and Euler (1765), 
so it is called Chapple's formula or the Chapple-Euler formula.

The geometrical properties of the Blaschke product allow us to 
extend the inner circle $ C $ of this formula to an ellipse
\cite{daepp}, \cite{frantz}
(cf. \cite{fuji-cmft} for an extension of Fuss' formula \cite{fuss}
 in the case of quadrilaterals).

A Blaschke product of degree $ d $ is a rational map defined by
\begin{equation}\label{eqdef:bla}
  B(z)=e^{i\theta}\prod_{k=1}^{d}\frac{z-a_k}{1-\overline{a_k}z}\quad
    (a_k\in\mathbb{D},\ \theta\in\mathbb{R}).
\end{equation}
In the case that $ \theta=0 $ and $ B(0)=0 $, $ B $ is called {\it canonical}.
It is enough to consider only the canonical Blaschke products 
to study the geometrical properties of the inverse images 
of the Blaschke products.
In fact, for 
\[
    f_1(z)=e^{-\frac{\theta}d i}z \quad \mbox{and}\quad
    f_2(z)=\frac{z-(-1)^da_1\cdots a_d e^{i\theta}} 
                {1-(-1)^d\overline{a_1\cdots a_d e^{i\theta}}z} 
\]
the composition $ f_2\circ B\circ f_1 $ is canonical, and
the geometrical properties of the preimages of these two Blaschke products
$ B $ and $ f_2\circ B \circ f_1 $ are the same.
We remark that, since the derivative of a Blaschke product has no zeros on
$ \partial\mathbb{D} $ (see, for instance \cite[Lemma 3.1]{Inner}),
there are $ d $ distinct preimages of $ \lambda\in\partial\mathbb{D} $ 
by $ B $.

Let $ w_1,\cdots,w_d $ be the $ d $ distinct preimages of 
$ \lambda\in\partial\mathbb{D} $ by $ B $
and $ \ell_{\lambda} $ the set of lines joining $ w_j $ and $ w_k $ ($j\neq k $).
Then, the envelope $ I_B $ of the family of lines 
$ \{\ell_{\lambda}\}_{\lambda\in\partial\mathbb{D}} $ is called 
the {\it interior curve associated with} $ B $.
The interior curve associated with a Blaschke product of degree $ 3 $
forms an ellipse.

\begin{theorem}[Daepp, Gorkin, and Mortini {\cite[Theorem 1]{daepp}}]
\label{thm:1}
 Let $ B $ be a Blaschke product of degree $ 3 $ with zeros at
 the points $ 0,a_1$, and $a_2 $.
 For $ \lambda\in\partial\mathbb{D} $, let
 $ w_1,w_2$, and $w_3 $ denote the points mapped to $ \lambda $ 
 under $ B $. Then the lines joining $ w_j $ and $ w_k $ for $ j\neq k $
 are tangent to the ellipse $ E $ with equation
 $  \vert w-a_1 \vert + \vert w-a_2 \vert =\vert 1-\overline{a_1}a_2\vert $.
\end{theorem}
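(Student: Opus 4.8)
My plan is to reduce to the canonical case, recast ``tangent to $E$'' as the classical optical (reflection) property of an ellipse, and then verify that property by a short computation driven by Vieta's formulas for the cubic $B(z)=\lambda$.

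Since the zeros of $B$ are $0,a_1,a_2$ we have $B(z)=e^{i\theta}z\,\dfrac{(z-a_1)(z-a_2)}{(1-\overline{a_1}z)(1-\overline{a_2}z)}$, and replacing $\lambda$ by $\lambda e^{-i\theta}$ we may assume $B$ canonical. For $\lambda\in\partial\mathbb{D}$ the equation $B(w)=\lambda$ is equivalent to the cubic
\[
  w^{3}-\bigl(a_1+a_2+\lambda\overline{a_1a_2}\bigr)w^{2}
  +\bigl(a_1a_2+\lambda(\overline{a_1}+\overline{a_2})\bigr)w-\lambda=0,
\]
so its three roots $w_1,w_2,w_3$ — all lying on $\partial\mathbb{D}$, being preimages of a boundary point — satisfy $w_1+w_2+w_3=a_1+a_2+\lambda\overline{a_1a_2}$ and $w_1w_2w_3=\lambda$. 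I also record that for $w\in\partial\mathbb{D}$ one has $\overline{w-a}=(1-\overline a\,w)/w$, which turns $B(w_3)=\lambda$ into $(w_3-a_1)(w_3-a_2)=\lambda\,w_3\,\overline{(w_3-a_1)(w_3-a_2)}$, i.e. $\lambda=\dfrac{(w_3-a_1)(w_3-a_2)}{w_3\,\overline{(w_3-a_1)(w_3-a_2)}}$.

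Next I use the following characterization, available because $E$ is a genuine ellipse (a circle when $a_1=a_2$), as $|a_1-a_2|<|1-\overline{a_1}a_2|$ for $a_1,a_2\in\mathbb{D}$: a line $\ell$ is tangent to $E$ exactly when the mirror image $a_1^{*}$ of the focus $a_1$ in $\ell$ lies on the circle centered at $a_2$ of radius $|1-\overline{a_1}a_2|$ — indeed then $\ell\cap[a_2,a_1^{*}]$ is a point of $E$ and every other point of $\ell$ has strictly larger sum of focal distances. For $w_1,w_2\in\partial\mathbb{D}$ the chord through them is $\{z:\ z+w_1w_2\,\bar z=w_1+w_2\}$, and (since $|w_1w_2|=1$) reflection in this chord is the orientation-reversing involution $z\mapsto (w_1+w_2)-w_1w_2\,\bar z$, which one checks directly fixes the chord pointwise. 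Hence $a_1^{*}=(w_1+w_2)-w_1w_2\,\overline{a_1}$; substituting $w_1+w_2=a_1+a_2+\lambda\overline{a_1a_2}-w_3$ and $w_1w_2=\lambda\overline{w_3}$ and then eliminating $\lambda$ via the relation above, I expect $a_1^{*}-a_2$ to collapse — the crucial cancellation being $w_3\overline{(w_3-a_1)}+\overline{a_1}(w_3-a_2)=1-\overline{a_1}a_2$ — to
\[
  a_1^{*}-a_2=-(w_3-a_1)\,\frac{1-\overline{a_1}a_2}{w_3\,\overline{(w_3-a_1)}}.
\]
Taking absolute values and using $|w_3|=1$ yields $|a_1^{*}-a_2|=|1-\overline{a_1}a_2|$, which is precisely the tangency condition; by the symmetry of the roles of $w_1,w_2,w_3$ the same argument covers each of the three chords.

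I expect the main obstacle to be purely computational: organizing the elimination of $\lambda$ together with the $\partial\mathbb{D}$-conjugation identities so that the factor $|w_3-a_1|$ cancels cleanly in the final step. Conceptually the only things to watch are invoking the correct reflection characterization of tangent lines and checking $a_1,a_2\notin\ell$ (immediate once the displayed identity is established), so that $a_1^{*}$ genuinely lies on, not inside, that circle; everything else is bookkeeping.
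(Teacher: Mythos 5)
Your proposal is correct, but note that the paper does not prove Theorem \ref{thm:1} at all: it is quoted verbatim from Daepp--Gorkin--Mortini \cite{daepp}, so there is no in-paper argument to compare against. Your route is the classical one via the optical property: a line $\ell$ is tangent to the ellipse with foci $a_1,a_2$ and focal-distance sum $r$ if and only if the reflection $a_1^*$ of $a_1$ in $\ell$ satisfies $|a_1^*-a_2|=r$ (and this characterization needs no side conditions, since a line separating the foci or passing through one of them forces $|a_1^*-a_2|<r$ by the triangle inequality, so the worries you raise at the end resolve themselves). The algebra goes through exactly as you predict: with $a_1^*=(w_1+w_2)-w_1w_2\overline{a_1}$, Vieta gives $a_1^*-a_2=-(w_3-a_1)-\lambda\overline{a_1}\,\overline{(w_3-a_2)}$, and substituting $\lambda=(w_3-a_1)(w_3-a_2)/\bigl(w_3\overline{(w_3-a_1)(w_3-a_2)}\bigr)$ together with $w_3\overline{(w_3-a_1)}=1-\overline{a_1}w_3$ yields the claimed collapse to $-(w_3-a_1)(1-\overline{a_1}a_2)/\bigl(w_3\overline{(w_3-a_1)}\bigr)$, whence $|a_1^*-a_2|=|1-\overline{a_1}a_2|$. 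The only hypotheses you are using implicitly -- that the three preimages are distinct and lie on $\partial\mathbb{D}$ -- are exactly what the paper records in its introduction (the derivative of a Blaschke product has no zeros on $\partial\mathbb{D}$), and the degenerate case $a_1=a_2$ is covered since the locus is then a circle of radius $(1-|a_1|^2)/2$ for which the same reflection criterion applies. It is worth observing that your conceptual argument contrasts sharply with the method this paper uses for its own generalization (Theorem \ref{thm:daen}), which establishes tangency by brute-force elimination of $\lambda$ from the product of the three line equations via Gr\"obner bases; your reflection argument does not transfer to that setting because the conformal images of the preimages no longer lie on a circle, which is precisely why the authors resort to computation there.
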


The above result is related to the following Poncelet's theorem \cite{pon1},
known as Poncelet's porism.
For details of this theorem, see, for example, \cite{flatto}.

\begin{theorem}[Poncelet \cite{pon1}]\label{thm:PON}
Let $ C_1 $ and  $C_2 $ be two smooth conics.
Suppose there is an $ n$-sided polygon inscribed in  $C_1 $ 
and circumscribed about $ C_2 $.
Then for any point $ p $ of $ C_1 $, there exists an  $n$-sided polygon
with $ p $ as a vertex, inscribed in $ C_1 $  and circumscribed about $ C_2 $.
\end{theorem}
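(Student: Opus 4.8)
The plan is to run the classical elliptic-curve argument, as presented in \cite{flatto}. Pass to the complex projective plane $\mathbb{P}^2$, where $C_1$ and $C_2$ become smooth conics, and introduce the \emph{incidence curve}
\[
  E=\{(p,\ell): p\in C_1,\ \ell\text{ tangent to }C_2,\ p\in\ell\}.
\]
A point of $E$ records a vertex $p$ together with one of the two possible sides of a Poncelet polygon emanating from it. The first step is to show that $E$ is a smooth curve of genus $1$: the projection $\pi\colon E\to C_1$, $(p,\ell)\mapsto p$, is a degree-two cover (through a generic point of $C_1$ pass two tangents to $C_2$), branched exactly over the four points of $C_1\cap C_2$, so the Riemann--Hurwitz formula gives $2g(E)-2=2(2\cdot 0-2)+4=0$, i.e. $g(E)=1$. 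Fix a base point and regard $E$ as an elliptic curve with its group law.

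Next I would introduce the two natural involutions of $E$: $\iota_1(p,\ell)=(p',\ell)$, where $p'$ is the other point of $\ell\cap C_1$, and $\iota_2(p,\ell)=(p,\ell')$, where $\ell'$ is the other tangent to $C_2$ through $p$. Each is a holomorphic involution, and each has fixed points --- $\iota_1$ is fixed exactly on the common tangent lines of $C_1$ and $C_2$, and $\iota_2$ exactly over the points of $C_1\cap C_2$ --- so neither is a translation by a $2$-torsion point; hence each acts on the elliptic curve as an ``anti-translation'' $x\mapsto c_i-x$. Consequently the composition $T=\iota_2\circ\iota_1$ satisfies $T(x)=x+\tau$ with $\tau=c_2-c_1$ fixed, i.e. $T$ is a \emph{translation} of $E$.

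The third step is the transfer of dynamics. By construction, if $(p_k,\ell_k)\in E$, where $\ell_k$ is the side through the vertex $p_k$, then $T(p_k,\ell_k)=(p_{k+1},\ell_{k+1})$, where $p_{k+1}$ is the next vertex and $\ell_{k+1}$ the next side; that is, $\pi\circ T$ is precisely the ``Poncelet step'' on $C_1$. The hypothesis that some $n$-sided polygon closes up says that $T^n$ has a fixed point on $E$. But $T^n$ is the translation $x\mapsto x+n\tau$, and a translation of an elliptic curve with a fixed point is the identity; hence $n\tau=0$ and $T^n=\mathrm{id}$ on all of $E$. Projecting by $\pi$, the $n$-fold Poncelet step is the identity on $C_1$, so starting from \emph{any} $p\in C_1$ the polygon closes after $n$ steps, which is the assertion.

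I expect the main obstacle to lie in the first two steps carried out with care: verifying that under the sole hypothesis ``$C_1,C_2$ smooth'' the curve $E$ is genuinely smooth of genus one (degenerate intersection patterns and degenerate common-tangent configurations must be excluded, or reached by a deformation argument), and confirming that each $\iota_i$ really has a fixed point, so that it is an anti-translation rather than something more complicated. A minor further point is that the Poncelet problem is a real statement while the argument is complex; this is harmless, since $T^n=\mathrm{id}$ is a polynomial identity and therefore restricts to the real loci of $E$ and of $C_1$. If one prefers to avoid elliptic curves, an alternative is to exhibit an explicit $T$-invariant holomorphic $1$-form (equivalently, an invariant measure) on $C_1$ and argue via a rotation-number argument; but the computation above is the cleanest route, and the one the cited reference follows.
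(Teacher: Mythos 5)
The paper does not prove this statement: it is quoted as Poncelet's classical closure theorem, with \cite{flatto} given as a reference for details, so there is no in-paper argument to compare yours against. Your proposal is the standard modern proof (the elliptic-curve argument of Griffiths--Harris, which is also the route taken in Flatto's book), and as a sketch it is essentially correct: the incidence curve $E$, the Riemann--Hurwitz count giving genus one, the identification of the two involutions as anti-translations because each has fixed points (common tangents for $\iota_1$, the points of $C_1\cap C_2$ for $\iota_2$), and the conclusion that the Poncelet step is a translation whose $n$-th power, having a fixed point, must be the identity --- all of this is the correct skeleton, and the descent from the complex identity $T^n=\mathrm{id}$ to the real configuration is handled adequately.

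The one substantive caveat is the one you yourself flag: under the bare hypothesis that $C_1$ and $C_2$ are smooth conics, the curve $E$ need not be smooth of genus one. If $C_1$ and $C_2$ are tangent at some (possibly complex) point, the double cover $\pi\colon E\to C_1$ has fewer than four simple branch points and $E$ acquires a singularity (or the normalization has genus zero), and the group-law argument breaks down as stated; in fact in such degenerate configurations the closure theorem can fail or trivialize. So a complete proof must either add a transversality hypothesis (harmless in the nested-ellipse setting of this paper, where one can check the four complex intersection points are distinct for the conics actually used), or supply the limiting/deformation argument you allude to. Two smaller points you leave implicit but should state if you write this up: that the exact order of the translation $\tau$ equals $n$ when the given polygon is a genuine (non-repeated) $n$-gon, so the polygons produced at other base points are again genuine $n$-gons; and that for a real base point $p$ outside $C_2$ the two tangents and the successive chord endpoints are real, so the complex orbit of a real starting datum stays in the real locus.
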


The $n$-sided polygons inscribed in $ C_1$  and circumscribed about
$ C_2 $ are called the
{\it Poncelet $n$-polygons with respect to $ C_1 $ and $ C_2 $}
(in this paper, we do not consider non-convex polygons such as stellations).
Also, $ C_2 $ is called an {\it $ n $-inscribed conic in $ C_1 $}.

Theorem \ref{thm:1} shows that each Blaschke product of degree 3 constructs 
a $ 3 $-inscribed ellipse in $ \partial\mathbb{D} $.
The question arises whether every $ 3 $-inscribed ellipse in 
$ \partial\mathbb{D} $ can be
 constructed from some Blaschke product.
Frantz \cite{frantz} gave the answer to this question.

\begin{theorem}[Frantz {\cite[Proposition 3]{frantz}}]\label{thm:Fra}
An ellipse $ C $ is a $3$-inscribed ellipse in $\partial\mathbb{D}$ 
if and only if $ C $ is the interior curve with respect to a Blaschke product
of degree $ 3 $.
\end{theorem}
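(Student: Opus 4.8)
I would prove this equivalence one implication at a time, using Theorems~\ref{thm:1} and~\ref{thm:PON}. (\emph{Interior curve $\Rightarrow$ $3$-inscribed.}) Suppose $C$ is the interior curve of a Blaschke product $B$ of degree $3$. Since rotations of $\mathbb{D}$ preserve the class of $3$-inscribed ellipses, and by the normalization recalled in the Introduction the interior curve of $B$ is a rotation of that of its canonical form, it suffices to treat a canonical $B$, with zeros $0,a_1,a_2$. By Theorem~\ref{thm:1}, $C$ is then the ellipse $E\colon \vert w-a_1\vert+\vert w-a_2\vert=\vert 1-\overline{a_1}a_2\vert$; from $\vert 1-\overline{a_1}a_2\vert^{2}-\vert a_1-a_2\vert^{2}=(1-\vert a_1\vert^{2})(1-\vert a_2\vert^{2})>0$ we see $E$ is a genuine ellipse with foci $a_1,a_2$, lying strictly inside $\mathbb{D}$. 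For each $\lambda\in\partial\mathbb{D}$ the three preimages $w_1,w_2,w_3$ are distinct points of $\partial\mathbb{D}$, hence the vertices of a triangle inscribed in $\partial\mathbb{D}$; its three sides are tangent to $E$ by Theorem~\ref{thm:1}, so the triangle circumscribes $E$. Thus one Poncelet triangle for $(\partial\mathbb{D},E)$ exists, and Theorem~\ref{thm:PON} upgrades this to closure from every boundary point, i.e.\ $E$ is $3$-inscribed.

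(\emph{$3$-inscribed $\Rightarrow$ interior curve.}) Suppose $C$ is a $3$-inscribed ellipse in $\partial\mathbb{D}$; being inside a triangle inscribed in $\partial\mathbb{D}$, its foci $p,q$ lie in $\mathbb{D}$. Let $B_0$ be the canonical Blaschke product of degree $3$ with zeros $0,p,q$. By Theorem~\ref{thm:1} its interior curve $C_0$ is the ellipse with foci $p,q$ and string length $\vert 1-\overline{p}q\vert$, and by the implication just proved $C_0$ is $3$-inscribed and confocal with $C$. It therefore suffices to show that a confocal family of ellipses inside $\mathbb{D}$ contains at most one $3$-inscribed ellipse; equivalently, that the string length $s$ of every $3$-inscribed ellipse with foci $p,q$ equals $\vert 1-\overline{p}q\vert$. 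Given this, $C=C_0$, which realizes $C$ as an interior curve.

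For this Chapple-type constraint I would use the reflection property of the ellipse: if a line $\ell$ is tangent to an ellipse with foci $p,q$ and string length $s$, then the mirror image of $p$ in $\ell$ lies at distance $s$ from $q$. Fix a Poncelet triangle $w_1w_2w_3$ inscribed in $\partial\mathbb{D}$ and circumscribing $C$. Since $\vert w_j\vert=\vert w_k\vert=1$, the chord through $w_j$ and $w_k$ is the line $z+w_jw_k\overline{z}=w_j+w_k$, and reflection in it carries $p$ to $w_j+w_k-w_jw_k\overline{p}$; hence
\[
  \bigl\vert\, w_j+w_k-w_jw_k\overline{p}-q \,\bigr\vert \;=\; s
  \qquad\text{for every } \{j,k\}\subset\{1,2,3\}.
\]
Subtracting two of these relations kills $s$; with $(w_j+w_k-w_jw_k\overline p)-(w_j+w_\ell-w_jw_\ell\overline p)=(w_k-w_\ell)(1-w_j\overline p)$ and $\vert w_i\vert=1$, the algebra should collapse to $s=\vert 1-\overline{p}q\vert$. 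Carrying out this reduction, and verifying that the three relations are mutually consistent and genuinely determine $s$ — which is where the full $3$-inscribed hypothesis (closure, not merely tangency to one triangle) enters — is the step I expect to be the main obstacle.

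There is also a computation-free route. The map $\Phi\colon(a_1,a_2)\mapsto$ (the ellipse of Theorem~\ref{thm:1}) is continuous on $\mathbb{D}^{2}$, lands in the space $\mathcal{E}_3$ of $3$-inscribed ellipses in $\partial\mathbb{D}$ (by the first implication), and is injective up to the swap $a_1\leftrightarrow a_2$ because $\Phi(a_1,a_2)$ has $\{a_1,a_2\}$ as its foci; it is open (a local homeomorphism off the diagonal, and open along it since near a circle all small perturbations of centre and eccentricity are attained), and its image is closed in $\mathcal{E}_3$ because the foci of a convergent sequence of such ellipses converge inside $\mathbb{D}$, so the parameters stay in $\mathbb{D}^2$ and the limit is again a value of $\Phi$. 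Connectedness of $\mathcal{E}_3$ then forces $\Phi$ onto, which is precisely ``$3$-inscribed $\Rightarrow$ interior curve''. In this version the obstacle shifts to the point-set input — that $\mathcal{E}_3$ is a connected $4$-manifold — needed to apply invariance of domain.
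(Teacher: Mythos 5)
Your forward implication (interior curve $\Rightarrow$ $3$-inscribed) is correct and is the standard argument: Theorem~\ref{thm:1} produces one circumscribing triangle and Theorem~\ref{thm:PON} upgrades it to closure. The problem is the converse, and there you have a genuine gap rather than a proof. Your reduction is right --- it suffices to show that among confocal ellipses with foci $p,q\in\mathbb{D}$ at most one is $3$-inscribed, equivalently that the string length of any such ellipse must equal $\vert 1-\overline{p}q\vert$ --- but neither of your two routes establishes this. The reflection-property route sets up three correct equations $\vert w_j+w_k-w_jw_k\overline p-q\vert=s$, observes that pairwise subtraction eliminates $s$, and then asserts that ``the algebra should collapse to $s=\vert 1-\overline p q\vert$.'' It does not collapse by subtraction alone: the differences only say that $q$ is the circumcenter of the three reflected points, i.e.\ they determine $(q,s)$ from $(p,w_1,w_2,w_3)$, and extracting the identity $s=\vert1-\overline p q\vert$ requires an actual computation that you do not carry out and explicitly flag as ``the main obstacle.'' The invariance-of-domain route likewise rests on unproved point-set input (that the space of $3$-inscribed ellipses is a connected $4$-manifold). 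A proof that twice defers its only hard step is not a proof.

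Two further comments. First, the missing uniqueness is exactly Lemma~\ref{prop:1} of the paper (stated for an elliptic outer boundary, with the disk as the degenerate case, and attributed to Frantz's method): if $r'<r$, the confocal ellipse $\widetilde{E}'$ lies strictly inside $\widetilde E$, so the tangent cone from a fixed vertex $z_1$ narrows, the new contact chord $[z_2',z_3']$ falls outside the original triangle by convexity, and hence cannot touch $\widetilde E'$; Poncelet's porism then rules out $3$-inscribedness. This three-line convexity argument replaces all of your algebra and is the route you should take. Second, your diagnosis of where the difficulty lies is off: the full closure hypothesis is \emph{not} what is needed to force $s=\vert1-\overline p q\vert$. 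In fact the identity already holds for the inellipse with focus $p$ of any \emph{single} triangle inscribed in $\partial\mathbb{D}$, because writing $e_1=w_1+w_2+w_3$, $e_3=w_1w_2w_3$, the real-linear equation $q+e_3\overline p\,\overline q=e_1-p$ has a unique solution $q$ for $\vert p\vert<1$, and for the Blaschke product with zeros $0,p,q$ the given triangle is the preimage of $\lambda=e_3$ (the second Vieta relation is then automatic since $e_2=e_3\overline{e_1}$ on the unit circle); Theorem~\ref{thm:1} plus the uniqueness of a conic with prescribed focus tangent to three given lines then identifies the inellipse and yields the identity. That observation would complete your algebraic route, but as written the proposal leaves its decisive step unproved.
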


Thus there is a close relationship between the interior curves associated 
with Blaschke products and $ 3 $-inscribed ellipses in 
$ \partial\mathbb{D} $.
If we could extend these results to the case that the outer curve is a conic,
we would find results more closely related to Poncelet's theorem.

In this paper, we introduce a Blaschke-like map on a domain
$ D $ whose boundary is an ellipse or a parabola by using
a conformal map from the unit disk to $ D $.
We then study the geometric properties of the Blaschke-like maps.

Let $ \varphi_t $ be the following Joukowski transformation
\begin{equation}\label{eq:jow}
 z=\varphi_t(w)=\frac{1}{1+t^2}\Big(t^2w+\dfrac{1}{w}\Big) \qquad (0<t<1).
\end{equation}
The transformation $ \varphi_t $ conformally maps 
the unit disk $ \mathbb{D}$ in the $ w $-plane onto 
the exterior of the elliptical disk $ \mathbb{E}_t $
with semi-minor axis $(1-t^2)/(1+t^2) $ and semi-major axis $ 1 $.
Let $ E_t=\partial \mathbb{E}_t  $.
We remark that each ellipse is similar to the ellipse given by $ E_t$ for some
$ t $ with $ 0<t<1 $.
Therefore, in the following discussion,
it is sufficient to consider ellipses of the form $ E_t $.

For a canonical Blaschke product $ B $, set 
$ B_{\varphi_t}=\varphi_t\circ B\circ \varphi_t^{-1} $,
$ \varphi_t $ is the conformal map mentioned above.
We call $ B_{\varphi_t} $ a {\it Blaschke-like map associated with 
$ B $ and $ \varphi_t $}.


We will show the following result, which is an extension of Theorem
\ref{thm:1} (see Figure \ref{pic:daen}
     \footnote{Figures \ref{pic:daen} and \ref{pic:parab} are drawn by
      using GeoGebra ({\tt https://www.geogebra.org/)}}).

\begin{theorem}\label{thm:daen}
  Let $ B_{\varphi_t} $ be a Blaschke-like map associated with
  a Blaschke product $ B $ of degree $ 3 $ and $ \varphi_t $.
  Then, the interior curve with respect to $ B_{\varphi_t} $
  is an ellipse.
\end{theorem}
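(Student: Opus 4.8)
The plan is to reduce the statement to Theorem~\ref{thm:1} via the following observation: although $\varphi_t$ is a genuine Joukowski map and not affine, its restriction to the unit circle coincides with an invertible real-affine map. Indeed, for $w\in\partial\mathbb{D}$ one has $1/w=\overline w$, so
\[
  \varphi_t(w)=\frac{1}{1+t^2}\bigl(t^2w+\overline w\bigr)=\alpha(w),
\]
where $\alpha\colon\mathbb{C}\to\mathbb{C}$ is the $\mathbb{R}$-linear map that in Cartesian coordinates reads $\alpha(x+iy)=x+i\,\frac{t^2-1}{1+t^2}\,y$. Since $0<t<1$, the map $\alpha$ is nondegenerate, and it carries $\partial\mathbb{D}$ bijectively onto $E_t$.

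Next I would unwind the definition $B_{\varphi_t}=\varphi_t\circ B\circ\varphi_t^{-1}$. Fix $\lambda\in E_t$ and set $\mu=\varphi_t^{-1}(\lambda)\in\partial\mathbb{D}$; if $w_1,w_2,w_3$ are the three (necessarily distinct) preimages of $\mu$ under $B$, then the three preimages of $\lambda$ under $B_{\varphi_t}$ are $W_j=\varphi_t(w_j)=\alpha(w_j)$, the last equality because each $w_j$ lies on $\partial\mathbb{D}$. Since $\alpha$ is affine, it sends the line through $w_j$ and $w_k$ to the line through $W_j$ and $W_k$; and as $\lambda$ runs over $E_t$ the point $\mu$ runs over all of $\partial\mathbb{D}$. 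Hence the family of lines whose envelope defines the interior curve of $B_{\varphi_t}$ is exactly the $\alpha$-image of the family of chords $\{\ell_\mu\}_{\mu\in\partial\mathbb{D}}$ attached to $B$. Because an invertible affine map takes the envelope of a family of lines to the envelope of the image family (tangency being preserved under diffeomorphisms), the interior curve of $B_{\varphi_t}$ equals $\alpha(I_B)$, where $I_B$ is the interior curve of $B$. By Theorem~\ref{thm:1} (together with the definition of the interior curve), $I_B$ is the ellipse $|w-a_1|+|w-a_2|=|1-\overline{a_1}a_2|$, so $\alpha(I_B)$ is the image of an ellipse under an invertible affine map, hence again an ellipse.

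I do not expect a serious obstacle here: the argument turns entirely on the single observation that $\varphi_t|_{\partial\mathbb{D}}$ is affine, after which the degree-$3$ hypothesis enters only through Theorem~\ref{thm:1}. The only points that need (routine) care are that the relevant preimages indeed lie on $\partial\mathbb{D}$, so that $1/w=\overline w$ may legitimately be used, and that $\alpha$ is nondegenerate. The same scheme in fact yields more: for a canonical Blaschke product of any degree $d$, the interior curve of $B_{\varphi_t}$ is the affine image $\alpha(I_B)$ of the interior curve of $B$.
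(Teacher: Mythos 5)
Your proof is correct, and it takes a genuinely different route from the paper. The paper proves Theorem~\ref{thm:daen} by brute force: it writes the product $L=l_{12}l_{13}l_{23}$ of the three chord equations, eliminates $w_1,w_2,w_3$ via Vieta's formulas and a Gr\"obner-basis computation, solves $\partial L_\lambda/\partial\lambda=0$ for $\lambda$ to form the envelope, and then checks non-degeneracy of the resulting conic by explicit discriminant inequalities. Your argument instead rests on the single observation that $\varphi_t|_{\partial\mathbb{D}}$ is the restriction of the invertible $\mathbb{R}$-linear map $\alpha(x+iy)=x+i\tfrac{t^2-1}{1+t^2}y$, so that the chord family of $B_{\varphi_t}$ is the $\alpha$-image of the chord family of $B$ and the interior curve is $\alpha(I_B)$, an affine image of the Daepp--Gorkin--Mortini ellipse. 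The paper in fact records exactly this observation afterwards, in Remark~\ref{rem:contraction} (where it verifies $(t^2-1)^2g_I\bigl((\overline{z}-t^2z)/(1-t^2)\bigr)=g_I^{\varphi_t}(z)$), and it is precisely this affine correspondence that drives the proof of Corollary~\ref{cor:unique}; so your route is fully consistent with the authors' own point of view, just promoted from a remark to the proof itself. What your approach buys is brevity, conceptual clarity, automatic non-degeneracy (an invertible affine map cannot degenerate an ellipse), and the free generalization you note to arbitrary degree $d$. What the paper's computation buys, and your argument does not directly provide, is the explicit general-form equation \eqref{eq:gtildeI} of the interior ellipse with its coefficients $U,P,V,Q$, which the authors need downstream for the foci equation \eqref{eq:focif1f2}, the formula for $r$, and the comparison with the Cayley criterion in the Appendix. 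The only points worth making fully explicit in a write-up are the ones you already flag: that the three preimages of $\mu=\varphi_t^{-1}(\lambda)$ under $B$ lie on $\partial\mathbb{D}$ (so $1/w=\overline{w}$ applies), that $\alpha$ is a bijection $\partial\mathbb{D}\to E_t$ (so the whole chord family is swept out), and that an invertible affine map carries the envelope of a line family to the envelope of the image family.
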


\begin{figure}[htbp]
\centerline{
  \fbox{\includegraphics[width=0.4\linewidth]{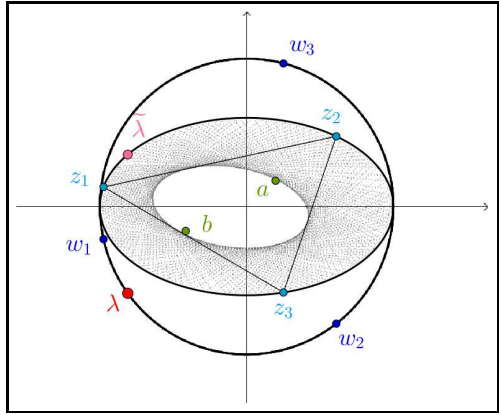}}}
\caption{The interior curve with respect to $ B_{\varphi_t} $ constructed 
         from $ B(w)=w(w-a)(w-b)/((1-\overline{a}w)(1-\overline{b}w)) $ with
         $ a=0.2+0.17i,b=-0.42-0.17i $ and $ t=0.5 $.}
\label{pic:daen}
\end{figure}

The following result gives an extension of Theorem \ref{thm:Fra}.

\begin{corollary}\label{cor:unique}
  For each ellipse $ E_t\ (0<t<1) $,
  $ C_2 $ is a $3$-inscribed ellipse in $ E_t $
  if and only if $ C_2 $ is the interior curve with respect to
  a Blaschke-like map $ B_{\varphi_t} $ for some Blaschke product $ B $ of
  degree $ 3 $.
\end{corollary}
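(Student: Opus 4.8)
\emph{Overview of the plan.} I would prove the two implications separately. The forward implication (``interior curve of a Blaschke-like map of degree $3$ $\Rightarrow$ $3$-inscribed ellipse in $E_t$'') is short, given Theorem~\ref{thm:daen}. The converse is the substance, and there are two natural routes: a direct one, building the Blaschke product from the explicit description of the interior curve furnished by the proof of Theorem~\ref{thm:daen} and verifying surjectivity of the resulting parameter map; and a slicker one, transporting everything to the unit disk by an affine normalization and invoking Frantz's Theorem~\ref{thm:Fra} together with the projective invariance of Poncelet's porism (Theorem~\ref{thm:PON}). Either way the crux is the same surjectivity statement.

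\emph{Forward implication.} Suppose $C_2$ is the interior curve with respect to $B_{\varphi_t}$ for a canonical degree-$3$ Blaschke product $B$. Theorem~\ref{thm:daen} already gives that $C_2$ is an ellipse, so I only need a triangle inscribed in $E_t$ and circumscribed about it. I would fix $\lambda\in E_t$, put $\mu=\varphi_t^{-1}(\lambda)\in\partial\mathbb{D}$, take the three $B$-preimages $w_1,w_2,w_3$ of $\mu$, and note that $v_j:=\varphi_t(w_j)$ are three distinct points of $E_t$ -- distinct because $\varphi_t$ is injective on $\partial\mathbb{D}$ -- whose connecting chords are, by the definition of the interior curve, tangent to $C_2$. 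For all but finitely many $\lambda$ the $v_j$ are noncollinear, so these chords bound a nondegenerate triangle inscribed in $E_t$ with $C_2$ inscribed in it; hence $C_2$ is a $3$-inscribed ellipse in $E_t$.

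\emph{Converse.} Let $C_2$ be a $3$-inscribed ellipse in $E_t$; I want a canonical degree-$3$ Blaschke product $B$ with $C_2(B_{\varphi_t})=C_2$. I would use the formula from the proof of Theorem~\ref{thm:daen}, which exhibits $C_2(B_{\varphi_t})$, for $B$ with nonzero zeros $a,b\in\mathbb{D}$, as the ellipse with an explicit pair of foci $f_1(a,b),f_2(a,b)$ and an explicit defining constant $s(a,b)$, all real-analytic in $(a,b)$ for fixed $t$. This gives a map $\mathcal{E}_t$ from the connected $4$-real-dimensional space of canonical degree-$3$ Blaschke products to the family of $3$-inscribed ellipses in $E_t$, the latter being connected (each such ellipse is inscribed in some triangle that is itself inscribed in $E_t$) and also $4$-dimensional (an ellipse has five real parameters and $3$-inscribedness in the fixed conic $E_t$ is one Cayley closure condition); by the forward implication $\mathcal{E}_t$ maps into it. So it suffices to show $\mathcal{E}_t$ is onto, which I would do by checking from the formulas that it is a local diffeomorphism and is proper -- as $a$ or $b\to\partial\mathbb{D}$ the interior curve degenerates onto a segment, exactly as the Daepp--Gorkin--Mortini ellipse does in the disk case, so it leaves the target family -- whence its image is open, closed, and nonempty. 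Alternatively, normalize $E_t$ to $\partial\mathbb{D}$ by the affine stretch $A$; then $A(C_2)$ is an ellipse $3$-inscribed in $\partial\mathbb{D}$, so by Frantz's Theorem~\ref{thm:Fra} it is the Daepp--Gorkin--Mortini ellipse of a degree-$3$ Blaschke product, and it remains to match this against the normalized form of $C_2(B_{\varphi_t})$ coming from the proof of Theorem~\ref{thm:daen} -- again a surjectivity check on the parameter correspondence $(a,b)\mapsto(\text{foci},\text{constant})$.

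\emph{Where the difficulty lies.} The main obstacle is precisely this surjectivity: the local, infinitesimal picture is a routine Jacobian computation, but controlling the \emph{global} image of the nonlinear map $(a,b)\mapsto(f_1,f_2,s)$ -- in practice the degeneration analysis as $a,b\to\partial\mathbb{D}$ that gives properness -- is where the real work sits. Equivalently, in Poncelet language, the nontrivial point is that the one-parameter family of triangles of $(E_t,C_2)$, pulled back to $\partial\mathbb{D}$ by $\varphi_t$, is genuinely the fibre family of a \emph{holomorphic} degree-$3$ self-map of $\mathbb{D}$, and not merely of some real-analytic $3$-to-$1$ covering of $\partial\mathbb{D}$.
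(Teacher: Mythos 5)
Your forward implication is fine (the paper simply cites Theorem~\ref{thm:daen} for it, and your added observation that the tangent chords of a generic fibre bound a circumscribing triangle is the right way to complete that citation). The converse, however, has a genuine gap: both of your routes terminate in a ``surjectivity check'' that you explicitly leave open, and the reason you are stuck is that you have missed the one structural fact that makes the paper's proof essentially immediate. On the boundary, $\varphi_t(w)=(t^2w+\overline{w})/(1+t^2)$, i.e.\ the real-affine map $X+iY\mapsto X+i\,\tfrac{t^2-1}{1+t^2}\,Y$, which carries $\partial\mathbb{D}$ onto $E_t$. Consequently the fibre of $B_{\varphi_t}$ over $\varphi_t(\lambda)$ is the affine image of the fibre of $B$ over $\lambda$, the family of chords for $B_{\varphi_t}$ is the affine image of the family of chords for $B$, and hence the interior curve of $B_{\varphi_t}$ is \emph{exactly} the affine image of the interior curve of $B$ (the paper records this as the identity $(t^2-1)^2\,g_I\bigl((\overline{z}-t^2z)/(1-t^2)\bigr)=g_I^{\varphi_t}(z)$ in Remark~\ref{rem:contraction}). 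Given this, the converse is two lines: stretch $C_2$ vertically by $(1+t^2)/(t^2-1)$ to get $C_2'$, which is $3$-inscribed in $\partial\mathbb{D}$ because the affine map preserves incidence and tangency; Frantz's Theorem~\ref{thm:Fra} produces a degree-$3$ Blaschke product $B$ with interior curve $C_2'$; and then the interior curve of $B_{\varphi_t}$ is the vertical contraction of $C_2'$, namely $C_2$. No matching of foci, no Jacobian, no properness.

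Measured against this, your first route (connected $4$-dimensional source and target, local diffeomorphism plus properness) is not a proof as written: the two hard points --- that the family of $3$-inscribed ellipses in $E_t$ is a connected $4$-manifold, and that the parameter map $(a,b)\mapsto(f_1,f_2,s)$ is proper onto it --- are asserted, not established, and you yourself flag them as ``where the real work sits.'' Your second route is the paper's route, but it stops one step short of closing: what you call ``a surjectivity check on the parameter correspondence'' is in fact the affine-equivariance of the interior curve under $\varphi_t|_{\partial\mathbb{D}}$, which is an identity you already have all the ingredients for (you noted $v_j=\varphi_t(w_j)$ in your forward implication), not a surjectivity statement to be verified.
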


R. Schwartz and S. Tabachnikov study the loci of the center of mass in
several meanings of Poncelet $ n $-sided polygons \cite{taba}.
In particular, they show the following result for the barycenter of mass
for the vertices of polygons.

\begin{theorem}[Schwartz and Tabachnikov {\cite[Theorem 1]{taba}}]
Let $ C_2 \subset C_1 $  be a pair of nested ellipses that
admit a $1$-parameter family of Poncelet $ n $-sided polygons  $\mathcal{P}_t $.
Then the locus of centers of mass for the vertices of each  $ \mathcal{P}_t $
is an ellipse similar to $ C_1 $ or a single point.
\end{theorem}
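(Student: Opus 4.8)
The plan is to reduce, by affine invariance, to the model case $ C_1=\partial\mathbb{D} $, and then to treat that case either with the Blaschke-product machinery of this paper or with the classical elliptic parametrization of Poncelet's porism. For the reduction, note that the three data in the statement behave well under an invertible real-affine map $ A $ of the plane: $ A $ carries a Poncelet $ n $-gon family inscribed in $ C_1 $ and circumscribed about $ C_2 $ to a Poncelet $ n $-gon family for the two image ellipses (lines, incidence and tangency are preserved); it intertwines the barycenter $ \{z_1,\dots,z_n\}\mapsto\tfrac1n\sum_k z_k $ with itself; and if $ A(x)=Lx+b $ then $ A $ maps the circle of center $ c $, radius $ r $ to $ Lc+b+r\,L(\partial\mathbb{D}) $, so the similarity class of the $ A $-image of \emph{any} circle depends only on the linear part $ L $. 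Choosing $ A $ with $ A(C_1)=\partial\mathbb{D} $, it therefore suffices to show that for a Poncelet family inscribed in the unit circle the locus of barycenters is a circle or a single point: if it is a circle $ \Gamma $, the original locus is the ellipse $ A^{-1}(\Gamma) $, which is similar to $ A^{-1}(\partial\mathbb{D})=C_1 $ by the same observation applied to $ A^{-1} $; if it is a point, so is the original locus.

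For the unit-circle case I would first record the concrete picture. When $ n=3 $, Theorem~\ref{thm:Fra} realizes $ C_2 $ as the interior curve of a canonical degree-$ 3 $ Blaschke product $ B(z)=z\frac{(z-a_1)(z-a_2)}{(1-\overline{a_1}z)(1-\overline{a_2}z)} $, whose fibers $ B^{-1}(\lambda) $ ($ \lambda\in\partial\mathbb{D} $) are exactly the vertex triples of the family; in general one would use the analogous realization of an $ n $-inscribed conic as the interior curve of a canonical degree-$ n $ Blaschke product $ B(z)=z\prod_{k=1}^{n-1}\frac{z-a_k}{1-\overline{a_k}z} $. For $ \lambda\in\partial\mathbb{D} $ the equation $ B(z)=\lambda $ is the polynomial identity $ z\prod_{k=1}^{n-1}(z-a_k)=\lambda\prod_{k=1}^{n-1}(1-\overline{a_k}z) $, monic of degree $ n $ with $ z^{n-1} $-coefficient $ -\sum_{k=1}^{n-1}a_k-(-1)^{n-1}\overline{a_1\cdots a_{n-1}}\,\lambda $; by Vieta's formula the sum of the $ n $ vertices over $ \lambda $ equals $ \sum_{k=1}^{n-1}a_k+(-1)^{n-1}\overline{a_1\cdots a_{n-1}}\,\lambda $, so the barycenter is
\[
  M(\lambda)=\frac1n\Big(\sum_{k=1}^{n-1}a_k+(-1)^{n-1}\,\overline{a_1\cdots a_{n-1}}\,\lambda\Big),
\]
which as $ \lambda $ runs over $ \partial\mathbb{D} $ parametrizes the circle centered at $ \tfrac1n\sum_{k=1}^{n-1}a_k $ of radius $ \tfrac1n|a_1\cdots a_{n-1}| $, collapsing to a point exactly when $ B $ has a multiple zero at the origin. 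With the reduction above, this settles $ n=3 $ and, granting the Blaschke realization, all $ n $.

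To get a proof valid for every $ n $ without that realization, I would instead use the classical elliptic structure behind Poncelet's theorem (see \cite{flatto}): the pairs (line tangent to $ C_2 $, point of $ C_1 $ lying on it) form an elliptic curve $ \mathcal{E} $ equipped with a degree-two projection $ z\colon\mathcal{E}\to C_1 $ and a shift map that is translation by a point $ \tau $, the closure hypothesis being $ n\tau=0 $; the vertices of the polygon through a pair $ p $ are then $ z(p),z(p+\tau),\dots,z(p+(n-1)\tau) $. Hence the barycenter, as a function on $ \mathcal{E} $, is $ M=\tfrac1n\sum_{k=0}^{n-1}z(\,\cdot+k\tau) $. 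It is invariant under $ +\tau $ (a relabelling of vertices) and, because the involution $ \iota $ of the double cover $ z $ is a translate of $ -\mathrm{id} $, also under $ \iota $; so it descends to a rational function $ R $ on the space of polygons $ \mathcal{E}/\langle\tau,\iota\rangle\cong\mathbb{P}^1 $. Each translate $ z(\,\cdot+k\tau) $ has degree $ 2 $, so $ \deg M\le 2n $; on the other hand $ M $ factors through the degree-$ 2n $ quotient $ \mathcal{E}\to\mathcal{E}/\langle\tau,\iota\rangle $, so $ \deg M=2n\deg R $, whence $ \deg R\le 1 $. Being an average of points of $ \partial\mathbb{D} $, $ M $ is bounded, so $ R $ sends the real circle of genuine polygons onto a genuine circle when $ \deg R=1 $ and onto a single point when $ \deg R=0 $ (the latter for instance when $ \mathcal{E} $ degenerates to genus zero, e.g.\ for concentric circles with their regular polygons); with the reduction, this completes the proof.

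The main obstacle, on the elliptic route, is constructing $ \mathcal{E} $ rigorously and verifying that the quotient $ \mathcal{E}\to\mathbb{P}^1 $ has degree exactly $ 2n $ — i.e.\ that the polygons close up as honest $ n $-gons and that the relabelling together with the projection involution of $ z $ generate the full deck group — together with the evenness of $ z $ needed for $ M $ to descend; this is precisely where the content of Poncelet's closure theorem enters. The Vieta route is computationally trivial, but it pushes the burden onto the degree-$ n $ analogue of Theorem~\ref{thm:Fra}: that \emph{every} $ n $-inscribed conic is the interior curve of some degree-$ n $ Blaschke product whose fibers recover the prescribed one-parameter family. For $ n\ge 4 $ the interior curve is the envelope of all $ \binom n2 $ chords joining the $ n $ preimages, so one must still check that the object governing the family is the convex polygon on consecutive preimages; supplying that is the principal gap to fill.
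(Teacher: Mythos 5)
The paper does not actually prove this statement: it is quoted verbatim from Schwartz and Tabachnikov \cite{taba}, and the closest argument in the paper is the proof of Proposition \ref{thm:jushin}, which establishes the analogous conclusion only for the one-parameter families produced by (Blaschke-like) maps. Your first route is, in the unit-circle model, exactly that argument: your Vieta formula for the barycenter coincides with \eqref{coef:d-1}, and your affine-invariance reduction is a correct generalization of the ``contract in one direction, similarity classes of circles are preserved'' step of Remark \ref{rem:contraction}. For $n=3$ this route is complete, because Theorem \ref{thm:Fra} realizes every $3$-inscribed ellipse, hence every Poncelet triangle family in a disk, as the fiber system of a degree-$3$ Blaschke product.

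The gap --- which you flag yourself, but which is genuinely fatal to Route 1 as a proof of the full theorem --- is the case $n\ge4$: there is no degree-$n$ analogue of Theorem \ref{thm:Fra} to invoke, and for $d\ge4$ the fibers of a Blaschke product need not circumscribe any single conic (the paper itself notes that the interior curve is then generally not an ellipse), so an arbitrary Poncelet $n$-gon family cannot be assumed to arise from a Blaschke product. Route 2 is the right general strategy, and the degree count ($M$ factors through a degree-$2n$ quotient while each summand $z(\cdot+k\tau)$ has degree $2$, forcing $\deg R\le1$) is sound in outline; but as written it is a program rather than a proof. You still need to construct $\mathcal{E}$, show that $\langle\tau,\iota\rangle$ has order exactly $2n$ with quotient $\mathbb{P}^1$, check that the real locus of genuine polygons is a circle rather than an interval (note $\iota$ interchanges the two components of the unramified real double cover $\mathcal{E}_{\mathbb{R}}\to C_1$, so the polygon space is one component modulo $\tau$ only), and verify that a degree-one $R$ carries that circle onto a full circle. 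So the proposal is correct, and parallel to the paper, for $n=3$ and for all Blaschke-generated families, but incomplete as a proof of the general statement.
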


For $ d>3 $, the interior curve with respect to a Blaschke product of degree
$ d $ is not always an ellipse.
However, the locus of the center of mass for the preimages
$ B_{\varphi_t}^{-1}(\widetilde{\lambda}) $ of each 
$ \widetilde{\lambda}\in\mathbb{E}_t $ forms an ellipse.

\begin{proposition}\label{thm:jushin}
Let $ z_1,\cdots,z_d $ be the $ d $ distinct preimages of
$ \widetilde{\lambda}\in E_t $ by $ B_{\varphi_t} $.
As $ \widetilde{\lambda} $ ranges over $ E_t $,
the center of mass $ w=(z_1+\cdots+z_d)/d $ 
of $ n $-sided polygon with vertices $ z_1,\cdots,z_d $,
forms an ellipse which is similar to $ E_t $ or a single point.
\end{proposition}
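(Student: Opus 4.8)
The plan is to compute the barycenter $w$ explicitly as a function of the parameter $\widetilde\lambda$ and to recognize its locus directly. Since $\varphi_t$ restricts to a homeomorphism of $\partial\mathbb{D}$ onto $E_t$ and a Blaschke product of degree $d$ has exactly $d$ distinct preimages of every point of $\partial\mathbb{D}$, the preimages of $\widetilde\lambda\in E_t$ under $B_{\varphi_t}$ are exactly $z_j=\varphi_t(w_j)$, where $w_1,\dots,w_d$ are the $d$ distinct preimages under $B$ of $\lambda:=\varphi_t^{-1}(\widetilde\lambda)\in\partial\mathbb{D}$; moreover $\lambda$ runs once over $\partial\mathbb{D}$ as $\widetilde\lambda$ runs over $E_t$. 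Using the formula \eqref{eq:jow} for $\varphi_t$ together with $1/w_j=\overline{w_j}$ (because $|w_j|=1$), I would first reduce the problem to
\[
 w=\frac1d\sum_{j=1}^{d}z_j=\frac{1}{d(1+t^2)}\bigl(t^2 S+\overline{S}\,\bigr),
 \qquad S:=\sum_{j=1}^{d}w_j .
\]

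Next I would evaluate $S$ by Vieta's formulas. Since $B$ is canonical, $B(0)=0$ forces one zero to be $0$; after relabelling $a_1=0$ we have $B(w)=w\prod_{k=2}^{d}\frac{w-a_k}{1-\overline{a_k}w}$, so the $w_j$ are precisely the roots of $P(w)-\lambda Q(w)$, where $P(w)=w\prod_{k=2}^{d}(w-a_k)$ has degree $d$ and $Q(w)=\prod_{k=2}^{d}(1-\overline{a_k}w)$ has degree at most $d-1$. Hence $P-\lambda Q$ is monic of degree $d$, and comparing the coefficient of $w^{d-1}$ yields
\[
 S=\sigma+(-1)^{d-1}\,\overline{\pi}\,\lambda,\qquad
 \sigma:=\sum_{k=2}^{d}a_k,\quad \pi:=\prod_{k=2}^{d}a_k .
\]

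Substituting back gives
\[
 w=\frac{t^2\sigma+\overline{\sigma}}{d(1+t^2)}
   +\frac{(-1)^{d-1}}{d(1+t^2)}\bigl(t^2\overline{\pi}\,\lambda+\pi\,\overline{\lambda}\bigr).
\]
If $\pi=0$ (that is, $B$ has a multiple zero at the origin), the second term vanishes identically and $w$ is the single point $(t^2\sigma+\overline\sigma)/(d(1+t^2))$. Otherwise, set $\zeta:=\overline{\pi}\,\lambda$, which runs over the circle of radius $|\pi|$; then $t^2\overline{\pi}\lambda+\pi\overline{\lambda}=t^2\zeta+\overline{\zeta}$, and writing $\zeta=|\pi|e^{i\alpha}$ this equals $|\pi|\bigl((1+t^2)\cos\alpha-i(1-t^2)\sin\alpha\bigr)$, which traces an ellipse centered at the origin with semi-axes $|\pi|(1+t^2)$ and $|\pi|(1-t^2)$. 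Its axis ratio $(1-t^2):(1+t^2)$ coincides with that of $E_t$, and multiplying by the real constant $(-1)^{d-1}/(d(1+t^2))$ and translating preserves the similarity class; hence the locus of $w$ is an ellipse similar to $E_t$.

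I expect the only subtle point to be the bookkeeping in the second step: verifying that $P-\lambda Q$ really is monic of degree $d$ — this is exactly where canonicity of $B$, i.e.\ the zero at the origin, is used — and extracting the subdominant coefficient correctly. Everything else is a direct substitution together with the elementary computation showing that the curve $\lambda\mapsto t^2\overline{\pi}\lambda+\pi\overline{\lambda}$ is an ellipse with the aspect ratio of $E_t$.
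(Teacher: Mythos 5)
Your proof is correct and follows essentially the same route as the paper: Vieta's formula applied to $P(w)-\lambda Q(w)$ gives the sum of the preimages as an affine function of $\lambda$, and the boundary formula $\varphi_t(w)=(t^2w+\overline{w})/(1+t^2)$ transfers this to $E_t$. The only cosmetic difference is that you parametrize the resulting ellipse directly, whereas the paper first records that the barycenter locus for $B$ itself is the circle $\bigl\vert\zeta-\sigma/d\bigr\vert=\vert\pi\vert/d$ and then invokes the $\mathbb{R}$-linearity of $\varphi_t\vert_{\partial\mathbb{D}}$ to map that circle to an ellipse similar to $E_t$.
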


We treat the case of the outer curve being a parabola in the same way.
Let
\[
  z=\psi_t(w)=\Big(\frac{1-w}{1+w}+t\Big)^2-t^2 \quad (t>0).
\]
The transformation $ \psi_t $ conformally maps the unit disk in the 
$ w $-plane onto the domain 
$ \mathbb{P}_t=\{ z\in\mathbb{C}\,;\,z=x+iy,\ y^2+4t^2x>0 \} $.
Set $ P_t=\partial \mathbb{P}_t $.

We remark that each parabola is similar to the parabola given by $ P_t$ 
for some $ t $ with $ t>0 $.
Therefore, in the following discussion,
it is sufficient to consider the parabola of the form $ P_t $.

For $ \psi_t $ and a canonical Blaschke product $ B $, we 
define the Blaschke-like map  $  B_{\psi_t}=\psi_t\circ B\circ \psi_t^{-1} $
as in the case of Joukowski transformation
(see Section \ref{sec:parab} for details).
For $ B_{\psi_t} $, we have the following results 
(see also Figure \ref{pic:parab}).

\begin{theorem}\label{thm:para}
  Let $ B_{\psi_t} $ be a Blaschke-like map associated with a Blaschke product
  $ B $ of degree $ 3 $ and $ \psi_t $.
  Then, the interior curve with respect to $ B_{\psi_t} $
  is an ellipse.
\end{theorem}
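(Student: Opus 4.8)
The plan is to reduce the parabola case to the ellipse case (Theorem \ref{thm:daen}) by exploiting the fact that the conformal map $\psi_t$ factors through a Möbius transformation followed by a square map, and that the relevant geometric data — tangent lines to a conic — behave well under projective transformations. First I would write $\psi_t = S_t \circ q \circ M$, where $M(w) = (1-w)/(1+w)$ is the Cayley-type Möbius map sending $\mathbb{D}$ to the right half-plane, $q(\zeta) = (\zeta+t)^2$, and $S_t(u) = u - t^2$ is a translation. Since $M$ maps $\mathbb D$ onto the half-plane $\{\mathrm{Re}\,\zeta>0\}$, the map $\zeta\mapsto\zeta+t$ shifts it to $\{\mathrm{Re}\,\zeta>-t\}$, and squaring this shifted half-plane produces exactly the parabolic region, so $\psi_t(\mathbb D)=\mathbb P_t$ as claimed. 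The three preimages $z_1,z_2,z_3$ of a point $\widetilde\lambda\in P_t$ under $B_{\psi_t}=\psi_t\circ B\circ\psi_t^{-1}$ are the images under $\psi_t$ of the three preimages $w_1,w_2,w_3$ of $\lambda=\psi_t^{-1}(\widetilde\lambda)\in\partial\mathbb D$ under $B$.

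The key step is to track the interior curve through each factor. By Theorem \ref{thm:1}, the lines through pairs $w_j,w_k$ are tangent to the ellipse $E$ with foci at the zeros $a_1,a_2$ of $B$. Pushing forward by the Möbius map $M$: a Möbius transformation of $\mathbb C\cup\{\infty\}$ sends lines and circles to lines and circles, but more importantly it is a projective transformation of $\mathbb{CP}^1$, and when we work in the real projective plane $\mathbb{RP}^2\supset\mathbb C$, the real-linear-fractional structure does not directly preserve conics — so here I would instead argue that $M$ maps the pencil of lines $\{\ell_\lambda\}$ to a pencil of circles all tangent to a fixed conic (the image of $E$ under $M$), using that tangency of a line to a conic is detected by a double intersection, which $M$ preserves. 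Actually the cleaner route is: the chords $\overline{w_jw_k}$ envelope the conic $E$, hence after applying $M$ the curves $\overline{M(w_j)M(w_k)}$ (now circular arcs) envelope $M(E)$. Then applying $q(\zeta)=(\zeta+t)^2$: under the translation $\zeta\mapsto\zeta+t$ the envelope is just translated, and under $\zeta\mapsto\zeta^2$ I must understand how a one-parameter family of curves and its envelope transform — the envelope of the image family is the image of the envelope (since $\zeta\mapsto\zeta^2$ is a local diffeomorphism away from $0$, and $0\notin M(\mathbb D)+t$).

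The genuine content, and the step I expect to be the main obstacle, is showing that after all these transformations the resulting envelope $I_{B_{\psi_t}}$ is again an \emph{ellipse} — not merely some conic or higher-degree curve. The transformations $M$ and $q$ are nonlinear, so a priori the envelope could be a complicated algebraic curve. I would handle this by the same computational strategy as in Theorem \ref{thm:daen}: parametrize $\lambda=e^{i\theta}\in\partial\mathbb D$, write the three preimages $w_j(\theta)$ explicitly (or symmetrically via the relation $B(w)=\lambda$), express the chord lines $\ell_\lambda$ in the $z$-coordinate as $A(\theta)z+\overline{A(\theta)}\,\bar z + C(\theta)=0$ with trigonometric-polynomial coefficients, and then derive the envelope by eliminating $\theta$ from this equation and its $\theta$-derivative. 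The point is that the $\psi_t$-conjugation introduces only rational/quadratic dependence, and after simplification the eliminant should be a degree-$2$ polynomial in $x,y$ with negative discriminant. I would verify the ellipse (rather than hyperbola or parabola) conclusion by checking boundedness: since $I_{B_{\psi_t}}$ is the image under the proper map $\psi_t$ of the compact interior curve $E\subset\mathbb D$ of $B$, it is compact, hence the conic must be an ellipse. Finally, one should note the envelope is a genuine curve, not degenerate to a point, because $B$ has degree $3$ (so the three preimages are distinct and non-collinear for generic $\lambda$), just as in the original Daepp–Gorkin–Mortini setting.
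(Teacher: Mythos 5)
Your fallback plan --- parametrize $\lambda\in\partial\mathbb{D}$, express the chords through the $z_k=\psi_t(w_k)$, eliminate $\lambda$ together with its derivative, and check that the resulting degree-$2$ eliminant satisfies the ellipse condition --- is exactly what the paper does (it computes the explicit conic $g_I^{\psi_t}(z)=\overline{U}z^2+Pz\overline{z}+U\overline{z}^2+\overline{V}z+V\overline{z}+Q=0$ and verifies $P^2-4U\overline{U}>0$). So the computational core of your proposal is sound and matches the paper's method, as does your observation that the three preimages of $\widetilde\lambda$ under $B_{\psi_t}$ are the $\psi_t$-images of the three preimages of $\lambda$ under $B$.

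However, two of your supporting arguments contain genuine errors. First, in the factorization $\psi_t=S_t\circ q\circ M$ you propose to track the envelope of the \emph{image arcs} $\overline{M(w_j)M(w_k)}$; but the interior curve of $B_{\psi_t}$ is by definition the envelope of the \emph{straight lines} through the pairs $z_j,z_k$, which is a different family of curves, so "the image arcs envelope $M(E)$" does not bear on the statement (and $M(E)$ is in any case a quartic, not a conic, since a M\"obius map is not a projective transformation of $\mathbb{RP}^2$). Second, and more seriously, your boundedness argument rests on the identity $I_{B_{\psi_t}}=\psi_t(I_B)$, which is false: $\psi_t$ is holomorphic and does not map lines to lines, so it does not carry the tangency configuration of $B$ to that of $B_{\psi_t}$, and the interior curve of $B$ lies in the open disk where no such transport is available. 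The idea you are missing --- and the one the paper uses (Remark on $\psi_t|_{\partial\mathbb{D}}$) --- is that \emph{on the unit circle} one has $w\overline w=1$, so $\psi_t$ there coincides with the real projective transformation $X+iY\mapsto (X-1)/(X+1)-2itY/(X+1)$ of $\mathbb{RP}^2$. Since the vertices $w_k$ lie on $\partial\mathbb{D}$, the chord through $z_j,z_k$ is the image of the chord through $w_j,w_k$ under \emph{this projective map} (not under $\psi_t$), which preserves lines, degrees, and tangency; hence $I_{B_{\psi_t}}$ is the image of the ellipse $I_B$ under a projective transformation whose exceptional line $x=-1$ misses $I_B$, and is therefore a bounded nondegenerate conic, i.e.\ an ellipse. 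With that substitution your argument closes; without it, the step distinguishing ellipse from hyperbola/parabola and ruling out degeneration is not justified.
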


\begin{figure}[htbp]
\centerline{
  \fbox{\includegraphics[width=0.6\linewidth]{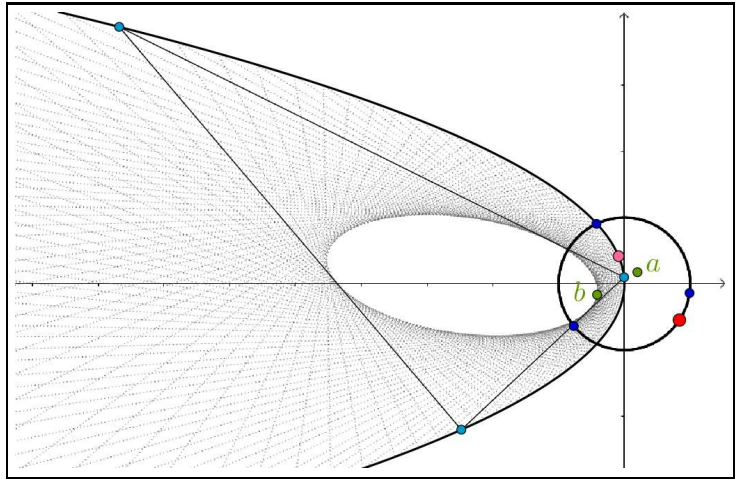}}}
\caption{The interior curve with respect to $ B_{\psi_t} $ constructed 
         from $ B(w)=w(w-a)(w-b)/((1-\overline{a}w)(1-\overline{b}w))$ with
         $ a=0.2+0.17i,b=-0.42-0.17i $ and $ t=0.7 $.}
\label{pic:parab}
\end{figure}

\begin{corollary}\label{cor:para}
  For each parabola $ P_t \ (t>0)$,
  $ C_2 $ is a $3$-inscribed ellipse in  $ P_t $
  if and only if $ C_2 $ is the interior curve with respect to
  a Blaschke-like map $ B_{\psi_t} $ for some Blaschke product $ B $ of
  degree $ 3 $.
\end{corollary}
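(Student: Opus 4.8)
The plan is to follow the proof of Corollary~\ref{cor:unique}, replacing the Joukowski transformation $\varphi_t$ by $\psi_t$ and Theorem~\ref{thm:daen} by Theorem~\ref{thm:para}. The ``if'' direction is routine. Suppose $C_2$ is the interior curve with respect to $B_{\psi_t}$ for some Blaschke product $B$ of degree $3$. By Theorem~\ref{thm:para}, $C_2$ is an ellipse. Fix $\lambda\in\partial\mathbb{D}$ and let $w_1,w_2,w_3$ be its preimages under $B$; then $\psi_t(w_1),\psi_t(w_2),\psi_t(w_3)$ lie on $P_t$ and, since a line meets a parabola in at most two points, they are the vertices of a non-degenerate triangle. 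Each of the three lines joining two of these points is a member of the family whose envelope defines the interior curve, hence is tangent to $C_2$. So this triangle is inscribed in $P_t$ and circumscribed about $C_2$, which shows $C_2$ is a $3$-inscribed ellipse in $P_t$.

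For the ``only if'' direction, let $C_2$ be a $3$-inscribed ellipse in $P_t$. By Theorem~\ref{thm:PON} there is a one-parameter Poncelet family $\{T_\sigma\}$ of triangles inscribed in $P_t$ and circumscribed about $C_2$; pulling their vertices back through $\psi_t$ gives a one-parameter family of triples $\tau_\sigma=\{w^\sigma_1,w^\sigma_2,w^\sigma_3\}$ on $\partial\mathbb{D}$ that covers $\partial\mathbb{D}$ three-to-one. I would then produce a canonical Blaschke product $B$ of degree $3$ whose fibers over $\partial\mathbb{D}$ are exactly the triples $\tau_\sigma$. Given such a $B$, the preimages under $B_{\psi_t}=\psi_t\circ B\circ\psi_t^{-1}$ of any point of $P_t$ are the vertices of one of the triangles $T_\sigma$, so the family of lines defining the interior curve of $B_{\psi_t}$ consists of sides of the $T_\sigma$, each tangent to $C_2$. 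This is an infinite family of distinct lines, and by Theorem~\ref{thm:para} the interior curve is an ellipse; since a conic is determined by five of its tangent lines, the interior curve must equal $C_2$, which is the desired conclusion. Thus the whole matter reduces to constructing $B$.

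Constructing $B$ is the main obstacle. Writing $0,a,b$ for the zeros of a canonical degree-$3$ Blaschke product and $p=a+b$, $q=ab$, the equation $B(z)=\lambda$ becomes $z^3-(p+\lambda\overline{q})z^2+(q+\lambda\overline{p})z-\lambda=0$, so a triple $\{w_1,w_2,w_3\}\subset\partial\mathbb{D}$ with product $\mu=w_1w_2w_3$ is a fiber of such a $B$ exactly when $w_1+w_2+w_3=p+\mu\overline{q}$ (the relation for the second elementary symmetric function then being automatic, since $|w_j|=1$ forces $\overline{w_1+w_2+w_3}=(w_1w_2+w_1w_3+w_2w_3)/\mu$). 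Hence the required $B$ exists precisely when, along the family $\{\tau_\sigma\}$, the sum of the three points is an affine function of their product $\mu_\sigma$, whose constant term $p$ and conjugate-linear coefficient $q$ determine admissible zeros $a,b\in\mathbb{D}$. I would verify this by running the envelope computation behind Theorem~\ref{thm:para} in reverse: that computation already describes the interior ellipse of $B_{\psi_t}$ explicitly in terms of $(a,b,t)$, and solving those relations for $(a,b)$ in terms of the standard data of the given ellipse $C_2$ should return exactly the needed $p,q$ together with $a,b\in\mathbb{D}$. Alternatively, one can argue more softly that the map sending $(a,b)$ to the interior curve of $B_{\psi_t}$, restricted to the two-real-parameter family of Blaschke products sharing a fixed fiber on $\partial\mathbb{D}$, takes values in the two-real-parameter family of ellipses inscribed in the corresponding triangle on $P_t$, and that this map is proper and a local diffeomorphism, hence onto the component containing $C_2$. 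Either route establishes the solvability statement, and with it the two directions above finish the proof.
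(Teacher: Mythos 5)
Your ``if'' direction is fine and is essentially what the paper does (Theorem~\ref{thm:para} gives that the interior curve is an ellipse, and the fibers of $B_{\psi_t}$ over $P_t$ give the circumscribing triangles). The problem is the ``only if'' direction: you correctly identify that everything reduces to producing a degree-$3$ Blaschke product $B$ whose fibers over $\partial\mathbb{D}$ are the pulled-back Poncelet triples, you call this ``the main obstacle'' --- and then you do not overcome it. You offer two sketches (run the envelope computation in reverse and solve for $(a,b)$; or a properness/local-diffeomorphism argument on a two-parameter family), but neither is carried out, and neither is routine: you would still have to verify that the sum of the three vertices is an affine function of their product along the whole family, and that the resulting parameters $a,b$ land in $\mathbb{D}$. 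As written this is a restatement of the difficulty, not a proof. In effect you are re-proving Frantz's theorem (Theorem~\ref{thm:Fra}) from scratch in the parabolic setting.

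The idea you are missing is the transport argument that makes the paper's proof one line. By Remark~\ref{rem:para}, the boundary correspondence $\psi_t|_{\partial\mathbb{D}}$ and its inverse $(x,y)\mapsto\bigl((1+x)/(1-x),\,y/(t(1-x))\bigr)$ are (affine images of) changes of coordinates that carry $P_t$ to $\partial\mathbb{D}$, map lines to lines, and preserve the degree of algebraic curves. Hence a $3$-inscribed ellipse $C_2$ in $P_t$ pulls back to a bounded conic $C_2'$ --- an ellipse --- that is $3$-inscribed in $\partial\mathbb{D}$, because the circumscribing triangles are carried to triangles inscribed in the unit circle with sides tangent to $C_2'$. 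Now Theorem~\ref{thm:Fra} (which is exactly the existence statement you are trying to re-derive) hands you a Blaschke product $B$ of degree $3$ with interior curve $C_2'$, and the same remark identifies the interior curve of $B_{\psi_t}$ with the forward image of $C_2'$, namely $C_2$. So the existence of $B$ should be quoted from the disk case after a line-preserving change of variables, not rebuilt from the Poncelet family; without that (or without completing one of your two sketched constructions), the ``only if'' direction is not established.
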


Does the conformal transformation that maps 
a disk onto a domain whose boundary is a conic always
induce an elliptic interior curve?
The answer to this question is ``no''.
We will give an example in Section \ref{sec:inside}.

\section{Outside of elliptic disks}
\subsection{The standard forms and the general forms}\

An ellipse on the complex plane is represented by the following 
two different forms of equations.
\begin{align}
  \label{eq:standard}
   & \vert z-f_1\vert+\vert z-f_2 \vert 
     =r \quad(f_1\neq f_2,\ r> \vert f_1-f_2 \vert ) \\
  \label{eq:general}
   & \overline{u}z^2+pz\overline{z}+u\overline{z}^2+\overline{v}z
     +v\overline{z}+q=0 \quad(p,q\in\mathbb{R},\ p^2-4u\overline{u}>0)
\end{align}
Here, we assume that the coefficients in equation \eqref{eq:general} 
also satisfy the condition
\begin{equation}\label{eq:add-condition}
  p\big(pv\overline{v}-u\overline{v}^2-\overline{u}v^2
                           +q(4u\overline{u}-p^2)\big)>0 
\end{equation}
that the ellipse does not degenerate to a point or the empty set.

An equation of the form \eqref{eq:standard} is called the 
{\it standard form} of an ellipse,
\eqref{eq:general} is called the {\it general form}.
In this paper, both forms are used as needed.
In \eqref{eq:standard}, $ f_1$ and $ f_2 $ are the foci of the ellipse,
and $ r $ is the sum of distances from the two foci to a point on the ellipse.
Note that equations \eqref{eq:standard} and \eqref{eq:general} for
$ f_1=f_2 $ and $ u=0 $, respectively, are equations of a circle.

Expanding  equation \eqref{eq:standard},
we obtain the equation in the following  general form.
\begin{align}\notag
  & (\overline{f_1}-\overline{f_2})^2z^2
    +2((f_1-f_2)(\overline{f_1}-\overline{f_2})-2r^2)z\overline{z}
    +(f_1-f_2)^2\overline{z}^2\\ \notag
 & \quad
   -2((\overline{f_1}-\overline{f_2})(f_1\overline{f_1}-f_2\overline{f_2})
      -r^2(\overline{f_1}+\overline{f_2}))z\\ \notag
 & \quad 
  -2((f_1-f_2)(f_1\overline{f_1}-f_2\overline{f_2})-r^2(f_1+f_2))\overline{z}\\
    \label{eq:StoG}
 &\quad
    +(f_1\overline{f_1}-f_2\overline{f_2})^2 
   -2(f_1\overline{f_1}+f_2\overline{f_2})r^2+r^4=0.
\end{align}
Conversely, the following gives the transformation 
from general form to standard form.

\begin{lemma}\label{lemma:GtoS}
 The foci $ f_1 $ and $ f_2 $  of the ellipse \eqref{eq:general} are
 the solutions of the following equation
  \begin{equation}\label{eq:f1f2}
    (4u\overline{u}-p^2)\zeta^2+(4u\overline{v}-2pv)\zeta+4qu-v^2=0,
  \end{equation}
 and the sum $ r $ is given by 
  \begin{equation}\label{eq:r}
     r=\frac12 \vert f_1-f_2 \vert\sqrt{2+\Big\vert\frac{p}{u}\Big\vert}.
  \end{equation}
\end{lemma}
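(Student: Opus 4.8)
The plan is to read off the relation between \eqref{eq:general} and \eqref{eq:StoG}. Since \eqref{eq:StoG} is a general form of the very ellipse \eqref{eq:standard}, and any two general forms of the same ellipse differ by a nonzero real scalar (the assignment sending the real coefficients of $ax^2+bxy+cy^2+dx+ey+f=0$ to the data $(u,p,v,q)$, with the $z^2$-coefficient forced to be $\overline u$, is a bijection that is linear over $\mathbb R$, and the coefficient of $z\overline z$ cannot vanish for an ellipse), there is $\rho\in\mathbb R\setminus\{0\}$ with
\[
 u=\rho\,(f_1-f_2)^2,\qquad p=2\rho\bigl((f_1-f_2)(\overline{f_1}-\overline{f_2})-2r^2\bigr),
\]
\[
 v=-2\rho\bigl((f_1-f_2)(f_1\overline{f_1}-f_2\overline{f_2})-r^2(f_1+f_2)\bigr),
\]
\[
 q=\rho\bigl((f_1\overline{f_1}-f_2\overline{f_2})^2-2(f_1\overline{f_1}+f_2\overline{f_2})r^2+r^4\bigr),
\]
obtained by matching the coefficients of $\overline z^2$, $z\overline z$, $\overline z$ and $1$ in \eqref{eq:general} against \eqref{eq:StoG}; the $z^2$- and $z$-coefficients just give the complex conjugates of the relations for $u$ and $v$.

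Next I would substitute these expressions into the three coefficients of \eqref{eq:f1f2}. Writing $\delta=f_1-f_2$, $\sigma=f_1+f_2$ and simplifying with the identities $f_1\overline{f_1}-f_2\overline{f_2}=\tfrac12(\delta\overline\sigma+\overline\delta\sigma)$ and $f_1\overline{f_1}+f_2\overline{f_2}=\tfrac12(|\sigma|^2+|\delta|^2)$, the cross terms collapse and one finds
\[
 4u\overline u-p^2=16\rho^2r^2\bigl(|\delta|^2-r^2\bigr),\qquad
 4u\overline v-2pv=-16\rho^2r^2\,\sigma\,\bigl(|\delta|^2-r^2\bigr),
\]
\[
 4qu-v^2=16\rho^2r^2\,f_1f_2\,\bigl(|\delta|^2-r^2\bigr),
\]
the last using $(f_1+f_2)^2-(f_1-f_2)^2=4f_1f_2$. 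By hypothesis $p^2-4u\overline u>0$, so $4u\overline u-p^2\neq0$; moreover the factor $|\delta|^2-r^2=|f_1-f_2|^2-r^2$ is nonzero as well (in fact negative, since for a genuine ellipse \eqref{eq:standard} forces $r>|f_1-f_2|$, which is exactly the non-degeneracy ensured by \eqref{eq:add-condition}). Hence dividing \eqref{eq:f1f2} through by $4u\overline u-p^2$ yields the monic quadratic $\zeta^2-(f_1+f_2)\zeta+f_1f_2=0$, whose two roots are $f_1$ and $f_2$ by Vieta's formulas. This proves the first assertion.

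For the formula for $r$, from $u=\rho\,\delta^2$ and $p=2\rho(|\delta|^2-2r^2)$ we get $p/u=2(|\delta|^2-2r^2)/\delta^2$, so, using $r>|f_1-f_2|$ and hence $2r^2-|\delta|^2>0$,
\[
 \Bigl|\frac pu\Bigr|=\frac{2(2r^2-|\delta|^2)}{|\delta|^2}=\frac{4r^2}{|\delta|^2}-2 ,
\]
whence $2+|p/u|=4r^2/|f_1-f_2|^2$ and $r=\tfrac12|f_1-f_2|\sqrt{2+|p/u|}$, as claimed. (For $u=0$ the ellipse is a circle: \eqref{eq:f1f2} reduces to $-(p\zeta+v)^2=0$, so $f_1=f_2=-v/p$ is the center, and \eqref{eq:r} is then to be read as a limit $|f_1-f_2|\to 0$.)

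The one step that needs genuine care is the middle computation: a priori each of $4u\overline u-p^2$, $4u\overline v-2pv$, $4qu-v^2$ is a quartic in $f_1,f_2,\overline{f_1},\overline{f_2},r$, and the point is to see that every term not carrying the common factor $\rho^2r^2(|f_1-f_2|^2-r^2)$ cancels. The cancellation is driven by the single identity $2\delta\bigl(f_1\overline{f_1}-f_2\overline{f_2}\bigr)=|\delta|^2\sigma+\delta^2\overline\sigma$; once this is in hand the rest is bookkeeping. The remaining ingredients — uniqueness of a conic's equation up to a real scalar, and the inequality $r>|f_1-f_2|$ — are either standard or already recorded in \eqref{eq:add-condition}.
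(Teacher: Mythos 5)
Your argument is correct, and it reaches the conclusion by a genuinely different route than the paper. Both proofs ultimately rest on the same fact --- that the degree-two defining equation of a nondegenerate ellipse is unique up to a nonzero real scalar --- but they exploit it differently. The paper uses it only implicitly, to justify that the combination $p\cdot(\ref{eq:StoG})-2(\vert f_1-f_2\vert^2-2r^2)\cdot(\ref{eq:general})$ vanishes identically; it then extracts the coefficient relations \eqref{eq:a}--\eqref{eq:u} and performs a chain of eliminations (first $r^2$, then $\overline{f_1}+\overline{f_2}$, then $\overline{f_1}\,\overline{f_2}$), arriving at the factored relation \eqref{eq:seki} from which a spurious factor must be ruled out using $f_1\neq f_2$. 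You instead make the proportionality explicit, parametrize $(u,p,v,q)$ by $(f_1,f_2,r,\rho)$, and verify by direct substitution that the three coefficients of \eqref{eq:f1f2} are all equal to $16\rho^2r^2(\vert f_1-f_2\vert^2-r^2)$ times $1$, $-(f_1+f_2)$, $f_1f_2$ respectively --- I checked these three identities and they hold, with the cancellation driven, as you say, by $2\delta(\vert f_1\vert^2-\vert f_2\vert^2)=\vert\delta\vert^2\sigma+\delta^2\overline{\sigma}$. This buys you a shorter computation with no case split and no spurious root to discard, and the derivation of \eqref{eq:r} becomes a one-line consequence of $p/u=2(\vert\delta\vert^2-2r^2)/\delta^2$ rather than a separate manipulation of \eqref{eq:a}. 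The only soft spot is your justification of the uniqueness-up-to-scalar claim: the bijection between real and complex coefficient data shows the two encodings are equivalent, but the substantive fact is that a real quadratic polynomial vanishing on an ellipse is a scalar multiple of its defining polynomial; that is standard (and is exactly what the paper also assumes without proof), so it is fair to cite it, but it deserves to be named rather than folded into the linear-algebra remark.
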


\begin{proof}
As the ellipse given by  equation \eqref{eq:general}
satisfies the condition $ p^2-4 \vert u \vert^2>0 $, we have $ p\neq0 $.
Therefore, equations \eqref{eq:general} and \eqref{eq:StoG} both have a
non-zero coefficient of $ z\overline{z} $.
If these are equations for the same ellipse, then by eliminating 
$ z\overline{z} $, the following identity holds
\[
  p\times\big(\mbox{the left side of Eq.}\eqref{eq:StoG}\big)
  -2( \vert f_1-f_2 \vert^2-2r^2)\times
    \big(\mbox{the left side of Eq.}\eqref{eq:general}\big)\equiv 0. 
\]
Comparing each coefficient, we have
\begin{align}
   \label{eq:a}
  &  2(2r^2- \vert f_1-f_2 \vert^2)u+(f_1-f_2)^2p=0,\\
   \label{eq:i}
  & (2r^2- \vert f_1-f_2 \vert^2)v
     -\big((f_1-f_2)(\vert f_1\vert^2-\vert f_2\vert^2)-r^2(f_1+f_2)\big)p=0, 
    \mbox{\quad and}\\
   \label{eq:u}
  &  2(2r^2-\vert f_1-f_2 \vert^2)q+\big((\vert f_1\vert^2-\vert f_2\vert^2)^2
     -2r^2(\vert f_1\vert^2+\vert f_2\vert^2)+r^4\big)p=0.
\end{align}
Eliminating $ r^2 $ from \eqref{eq:a} and \eqref{eq:i} gives
\[
    p(f_1-f_2)^2\big(2(\overline{f_1}+\overline{f_2})u+(f_1+f_2)p+2{v}\big)=0.
\]
Since $ f_1\neq f_2 $ and $ p\neq 0 $, the following holds
\[
    2(\overline{f_1}+\overline{f_2})u+(f_1+f_2)p+2{v}=0.
\]
Eliminating $ \overline{f_1}+\overline{f_2} $ from the above equation
and the complex conjugate of this equation, we have
\begin{equation}\label{eq:wa}
  f_1+f_2=\frac{2pv-4u\overline{v}}{4u\overline{u}-p^2}.
\end{equation}

Since $ p\neq 0 $, eliminating $ r^2 $ from \eqref{eq:a} 
and its complex conjugate \eqref{eq:a} gives the following
\[
  \big((\overline{f_1}+\overline{f_2})^2-4\overline{f_1}\overline{f_2}\big)u
  -\big((f_1+f_2)^2-4f_1f_2\big)\overline{u}=0,
\]
and then substituting \eqref{eq:wa} into the above equation
and multiplying by $ -(4u\overline{u}-p^2)/4 $, 
we obtain the following
\begin{equation}\label{sekisekib}
   -\overline{u}(4u\overline{u}-p^2)f_1f_2
     +u(4u\overline{u}-p^2)\overline{f_1}\overline{f_2}
    +\overline{v}^2u-\overline{u}v^2=0.
\end{equation}
Moreover, eliminating  $ r^2 $ from 
\eqref{eq:a} and \eqref{eq:u} gives
\begin{align*}
 & \big(4(f_1+f_2)^2(\overline{f_1}+\overline{f_2})^2
    -64f_1\overline{f_1}f_2\overline{f_2}\big)u^2\\
 & +\Big(4(f_1+f_2)(\overline{f_1}+\overline{f_2})\big((f_1+f_2)^2-4f_1f_2\big)p
   -16\big((f_1+f_2)^2-4f_1f_2\big)q\Big)u\\
 &+\big((f_1+f_2)^2-4f_1f_2\big)^2p^2=0,
\end{align*}
and substituting \eqref{eq:wa} into the above equation
and multiplying by $-(4u\overline{u}-p^2)^2/16$, we have
\begin{align*}
  & -p^2(4u\overline{u}-p^2)^2f_1^2f_2^2
   +4u^2(4u\overline{u}-p^2)^2f_1\overline{f_1}f_2\overline{f_2}\\
  & \quad 
    -2(4u\overline{u}-p^2)\big(8q\overline{u}u^2
    -2(p\overline{v}v+qp^2)u+p^2v^2\big)f_1f_2
    +(4qu-v^2)(2\overline{v}u-pv)^2=0.
\end{align*}
Using \eqref{sekisekib}, eliminating $ \overline{f_1}\overline{f_2} $ 
from the above equation, we have
\begin{equation}\label{eq:seki}
    \big((4u\overline{u}-p^2)f_1f_2-4qu+v^2\big)
    \big((4u\overline{u}-p^2)^2f_1f_2-(2u\overline{v}-pv)^2\big)=0.
\end{equation}
If the second factor of \eqref{eq:seki} is $ 0 $,
the equation having $ f_1 $  and $ f_2 $ as its
solutions is given by 
\[
    \zeta^2-(f_1+f_2)\zeta+f_1f_2
     =\Big(\zeta-\frac{2u\overline{v}-pv}{p^2-4u\overline{u}}\Big)^2=0,
\]
and the two solutions satisfy 
$ f_1=f_2=(2u\overline{v}-pv)/(p^2-4u\overline{u}) $.
This contradicts $ f_1\neq f_2 $.
Therefore, the first factor of \eqref{eq:seki} must be $ 0 $,
and the solutions of the equation
\[
  (4u\overline{u}-p^2)\big(\zeta^2-(f_1+f_2)\zeta+f_1f_2\big)
  =(4u\overline{u}-p^2)\zeta^2+(4u\overline{v}-2pv)\zeta+4qu-v^2=0
\]
give the two foci.

From \eqref{eq:a}, $ r $ is written as 
$ r^2=\vert f_1-f_2\vert^2/2-p(f_1-f_2)^2/(4u) $.
Since $ r^2>\vert f_1-f_2\vert^2 $ and $ r\in\mathbb{R} $,
$ -p(f_1-f_2)^2/(4u)>0 $ holds.
Therefore, we have 
\begin{align*}
 r^2 &=\bigg\vert \frac12 \vert f_1-f_2\vert^2
              -\frac{p}{4u}(f_1-f_2)^2\bigg\vert
              =\frac12 \vert f_1-f_2\vert^2
        +\Big\vert\frac{p}{4u}\Big\vert\vert f_1-f_2\vert^2
      =\frac14\Big(2+\Big\vert\frac{p}{4u}\Big\vert\Big)\vert f_1-f_2\vert^2,
\end{align*}
and $ r $ is given by \eqref{eq:r}.
\end{proof}

\begin{remark}
Equation \eqref{eq:f1f2} has two solutions counting multiplicity,
because \eqref{eq:f1f2} has a non-zero leading coefficient.

In the case of $ f_1=f_2 $,
the corresponding ellipse is a circle, and 
equation \eqref{eq:standard} can be written as 
\[
     \vert z-f_1 \vert =\frac{r}2 \quad (r>0). 
\]
The general form of a circle is
\[
    pz\overline{z}+\overline{v}z+v\overline{z}+q=0\quad
    (p,q\in\mathbb{R}, \ p\neq 0,\ v\overline{v}-pq>0) ,
\]
obtained by substituting $ u=0 $ in equation \eqref{eq:general}.
In addition, even when $ f_1=f_2 $, equation \eqref{eq:StoG} holds 
and equation \eqref{eq:f1f2} has
the multiple solution $ \zeta=-v/p $.
But, the right-side of equation \eqref{eq:r} is identically zero,
and the value $ r $ cannot be determined from \eqref{eq:r}.
In this case, $ r $ is obtained as a
solution to $ r^2=4(v\overline{v}-pq)/p^2 $ from \eqref{eq:u}.
\end{remark}

\subsection{Conformal deformation from the unit disk to 
            an outside of elliptical disk}\label{sec:DGM}\

The Joukowski transformation 
\[
 z=\varphi_t(w)=\frac{1}{1+t^2}\Big(t^2w+\dfrac{1}{w}\Big) \qquad (0<t<1),
\]
conformally maps the unit disk $ \mathbb{D} $ in the $ w $-plane onto the
exterior of the elliptic disk $ \mathbb{E}_t $ in the $ z $-plane
whose semi-major and semi-minor axes
are $ 1$ and $(1-t^2)/(1+t^2) $, respectively. 
Set $ E_t=\partial \mathbb{E}_t  $.
Note that $ \varphi_t $ is a continuous map from $ \overline{\mathbb{D}} $ 
to $ \widehat{\mathbb{C}}\setminus\mathbb{E}_t $.
The foci of $ {E}_t $ are $ \pm (2t)/(1+t^2) $ and
the eccentricity is given by $  e=(2t)/(1+t^2)$. 
Therefore, for any eccentricity $ e $ ($0<e<1$),
the unit circle can be mapped to an ellipse of eccentricity
$ e $ by $ \varphi_t $ with suitable value of $ t $.

The ellipse $ {E}_t $ is represented as
\begin{align}\label{eq:E1}
       \Big\vert z-\frac{2t}{1+t^2}\Big\vert
          +\Big\vert z+\frac{2t}{1+t^2}\Big\vert=2
          \qquad (0<t<1), \mbox{\quad or} \\
        \label{eq:E2}
   t^2z^2-(1+t^4)z\overline{z}+t^2\overline{z}^2+(1-t^2)^2=0\qquad (0<t<1).
\end{align}
For a canonical Blaschke product $ B $, 
let $ B_{\varphi_t}=\varphi_t\circ B\circ \varphi_t^{-1} $ 
(see Figure \ref{pic:joukow}).

\begin{figure}[htbp]
    \centering
    \includegraphics[width=0.7\linewidth]{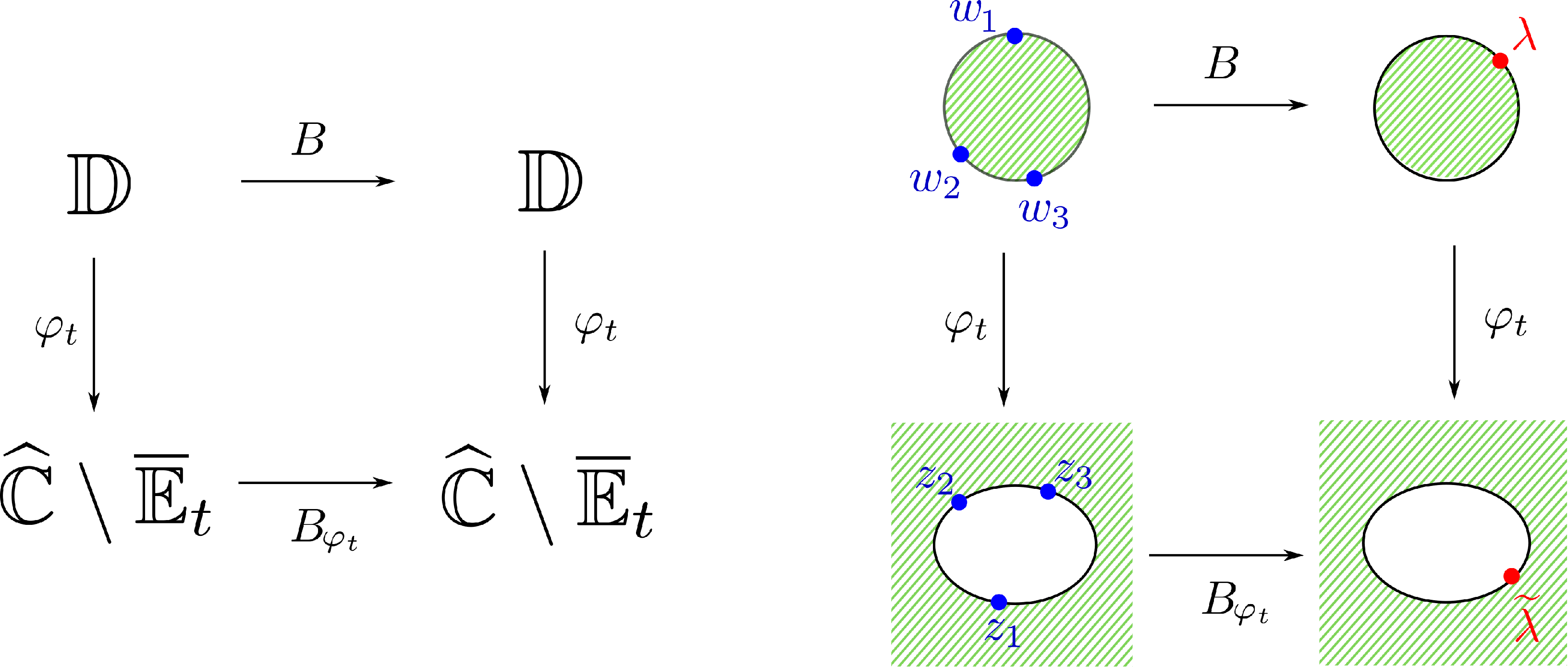}
    \caption{The Blaschke-like map
          $ B_{\varphi_t}=\varphi_t\circ B\circ\varphi_t^{-1} $.}
    \label{pic:joukow}
\end{figure}

Since $ \varphi_t(1/(t^2w))=\varphi_t(w) $,
the map $ \varphi_t $ conformally maps $ \{1/t^2<\vert w\vert \leq\infty\} $
onto $ \widehat{\mathbb C}\setminus\overline{\mathbb E}_t $ 
as well as conformally maps $ \mathbb{D} $ onto 
$ \widehat{\mathbb C}\setminus\overline{\mathbb E}_t $.
Therefore, for each $ z\in \widehat{\mathbb C}\setminus\overline{\mathbb E}_t $
we can choose a unique branch $ w $ of $ \varphi_t^{-1}(z) $
that satisfies $ \vert w\vert< 1 $.

Then, the map $B_{\varphi_t} $ maps the exterior of the elliptic disk 
$ \widehat{\mathbb{C}}\setminus\overline{\mathbb{E}}_t $ onto itself.
We call $ B_{\varphi_t} $ a {\it Blaschke-like map associated with 
$ B $ and $ \varphi_t $}.

\subsubsection{The interior curve of $ B_{\varphi_t} $}\

The Joukowski transformation $ z=\varphi_t(w) $ is a continuous 
map on $ \overline{\mathbb{D}} $, and on $ \partial\mathbb{D} $
it can be written 
\begin{equation}\label{map:phi}
    z=\varphi_t(w)=\frac{t^2w+\overline{w}}{1+t^2} \quad
       (w\in\partial\mathbb{D})
\end{equation}
since $ w\overline{w}=1 $ holds.
Therefore, on $ E_t $, the inverse can be expressed as 
\begin{equation}\label{map:phiIV}
      w=\varphi^{-1}_t(z)=\frac{\overline{z}-t^2z}{1-t^2}\qquad (z\in E_t). 
\end{equation}

Here, we prove Theorem \ref{thm:daen}.

\begin{proof}[Proof of Theorem \ref{thm:daen}]
  Let $ B $ be a canonical Blaschke product 
  $ B(w)=w(w-a)(w-b)/((1-\overline{a}w)(1-\overline{b}w)) $ of degree $ 3 $,
  and  $ B_{\varphi_t} $ the Blaschke-like map associated with
  $ B $ and $ \varphi_t $.
  For $ \widetilde{\lambda}\in E_t $, let $ z_1,\ z_2,\ z_3 $ 
  be the points mapped to $ \widetilde{\lambda} $ under $ B_{\varphi_t} $,
  and set
  \[
     \lambda=\frac{\overline{\widetilde{\lambda}}-t^2\widetilde{\lambda}}
                  {1-t^2},\qquad
     w_k=\frac{\overline{z_k}-t^2{z_k}}
                  {1-t^2} \quad (k=1,2,3).
  \]
  Then, it is clear that
  $ \widetilde{\lambda}=\varphi_t(\lambda), \ z_k=\varphi_t(w_k) $
  and $ B(w_k)=\lambda \ (k=1,2,3) $ hold for
  $ \lambda $ and $ w_k\ (k=1,2,3) $, where $ w_k\ (k=1,2,3) $ are
  the solutions of 
  \[
    w(w-a)(w-b)-\lambda(1-\overline{a}w)(1-\overline{b}w)=0.
  \]
  So, the following equalities hold from Vieta's formula
  \begin{equation}\label{eq:Bw}
      w_1+w_2+w_3=a+b+\lambda\overline{a}\overline{b}, \quad
      w_1w_2+w_1w_3+w_2w_3=ab+\lambda(\overline{a}+\overline{b}), \quad
      w_1w_2w_3=\lambda.     
  \end{equation}
  For each $ k,j=1,2,3 $ ($k<j$), the equation of line $ l_{kj} $ joining 
  two points $ z_k $ and $ z_j $ 
  is given as
  \[
     l_{kj}\ : \ (\overline{z_k}-\overline{z_j})z-(z_k-z_j)\overline{z}
         +z_k\overline{z_j}-\overline{z_k}z_j=0.
  \]
  From $ z_k=\varphi_t(w_k) $ and $ w_k\overline{w_k}=1 $, $ l_{kj} $ 
  is written as
  \[
     l_{kj}\ : \ (t^2-w_kw_j)z+(w_kw_jt^2-1)\overline{z}+(1-t^2)(w_k+w_j)=0.
  \]
  Let $ L=l_{12}l_{13}l_{23} $.

  We can eliminate $ w_1,w_2,w_3 $ from $ L=0 $ using \eqref{eq:Bw}.
  (We used Risa/Asir\footnote{{\tt http://www.math.kobe-u.ac.jp/Asir/}
  (Kobe distribution)} Symbolic computation system,
   to compute the Gr\"obner basis and the elimination ideals.)
  We have,
  \[
     L_{\lambda}=\Lambda_2\lambda^2+\Lambda_1\lambda+\Lambda_0=0,
  \]
  where
  \begin{align*}
   \Lambda_2 &=
     (z-t^2\overline{z}+\overline{b}t^2-\overline{b})
     (z-t^2\overline{z}+\overline{a}t^2-\overline{a}) 
     \big((\overline{b}\overline{a}t^2-1)z
        +(t^2-\overline{b}\overline{a})\overline{z}
        +(\overline{a}+\overline{b})(1-t^2)\big),\\
  \Lambda_1 &=
     \big(-(\overline{a}+\overline{b})t^4+(a+b)t^2\big)z^3 
     +\Big(\big(t^2(t^4+2)(\overline{a}+\overline{b})
              -(2t^4+1)(a+b)\big)\overline{z} \\
  & \
     -(t^2-1)\big(2\overline{b}\overline{a}t^4
            -( \vert ab \vert^2+ \vert a+b \vert^2+3)t^2+2ba\big)\Big)z^2  \\
  & \
     +\Big(\big(t^2(t^4+2)(a+b)-(2t^4+1)(\overline{a}+\overline{b})\big)
             \overline{z}^2 \\
  & \ 
     -(t^2-1)\big(( \vert ab \vert^2+ \vert a+b \vert^2+3)(t^4+1)
            -4(ba+\overline{b}\overline{a})t^2\big)\overline{z}\\
  & \ 
     +(t^2-1)^2\big((a+b)(2\overline{b}\overline{a}t^2-1)
         +(\overline{a}+\overline{b})(t^2-2ab)\big)\Big)z\\
  & \
    -\big((a+b)t^4-(\overline{a}+\overline{b})t^2\big)\overline{z}^3 
      -(t^2-1)\big(2bat^4-( \vert ab \vert^2+ \vert a+b \vert^2+3)t^2
      +2\overline{b}\overline{a}\big)
        \overline{z}^2\\
 & \ 
     +(t^2-1)^2\big((\overline{a}+\overline{b})(2abt^2-1)
       +(a+b)(t^2-2\overline{a}\overline{b})\big)\overline{z}
     -(t^2-1)^3\big( \vert ab \vert^2+ \vert a
        +b \vert^2-1\big),\\
  \Lambda_0 &=
   (t^2z-\overline{z}-bt^2+b)(t^2z-\overline{z}-at^2+a) 
   ((t^2-ba)z+(bat^2-1)\overline{z}+(a+b)(1-t^2)).
  \end{align*}

  Then, the envelope of the family of lines 
  $ \{L_{\lambda}\}_{\lambda\in\mathbb{D}} $ is obtained by
  \[
      L_{\lambda}=\frac{\partial}{\partial \lambda} L_{\lambda}=0 
  \]
  (see, for example, \cite[Chap. II]{HW} for envelopes).
  Since $ \frac{\partial}{\partial \lambda}L_{\lambda}=0 $ is a linear equation
  with variable $ \lambda $, it can be solved for $ \lambda $.
  Substituting this solution into $ L_{\lambda}=0 $, we can eliminate $ \lambda $.
  Then, we have 
  \[
  -\frac14
   \frac{\big(t^2z^2-(1+t^4)z\overline{z}+t^2\overline{z}^2+(1-t^2)^2\big)^2
          g_I^{\varphi_t}(z)}{\mbox{Denominator}}=0,
 \]
  where
  \begin{align*}
    \mbox{Denominator}=& \big(z-t^2\overline{z}+\overline{a}(t^2-1)\big)
        \big(z-t^2\overline{z}+\overline{b}(t^2-1)\big)\\
    & \times
     \big((\overline{a}\overline{b}t^2-1)z
      +(t^2-\overline{a}\overline{b})\overline{z}
      -(\overline{a}+\overline{b})(t^2-1)\big).
  \end{align*}
  If the factor $ t^2z^2-(1+t^4)z\overline{z}+t^2\overline{z}^2+(1-t^2)^2 $
  is zero, then it coincides with equation \eqref{eq:E2}
  and represents an ellipse.
  But this is the locus of the vertices of the triangle and does not
  form part of the envelope.
  Hence, 
   \begin{equation}\label{eq:gtildeI}
     {g}_I^{\varphi_t}(z)=\overline{U}z^2+Pz\overline{z}+U\overline{z}^2
                        +\overline{V}z+V\overline{z}+Q=0,
   \end{equation}
   gives the equation of the envelope, where
   \begin{align*}
     U &=(a-b)^2t^4
         +2(2 \vert ab \vert^2- \vert a+b \vert^2+2)t^2
         +(\overline{a}-\overline{b})^2 ,\\
     P &= -2\Big( (2 \vert ab \vert^2- \vert a+b \vert^2+2)(t^4+1) 
              +\big((a-b)^2+(\overline{a}-\overline{b})^2\big)t^2\Big) ,\\
     V &= -2(1-t^2)\Big(\big(( \vert ab \vert^2+1)(a+b)
             -(a^2+b^2)(\overline{a}+\overline{b}) \big)t^2 \\
       & \qquad
         + (a+b)(\overline{a}^2+\overline{b}^2)
          -( \vert ab \vert^2+1)(\overline{a}+\overline{b})\Big), \\
     Q &=(1-t^2)^2\big(( \vert ab \vert^2- \vert a+b \vert^2-1)^2
         -4 \vert a+b \vert^2 \big).
   \end{align*}
  This equation \eqref{eq:gtildeI} gives a conic.

  Now, we need to check that equation
  \eqref{eq:gtildeI} represents a non-degenerate ellipse,
  i.e., we have to check
  \[
      P^2-4U\overline{U} > 0 \quad\mbox{and}\quad
      P(-U\overline{V}^2+PV\overline{V}+4QU\overline{U}-\overline{U}V^2-QP^2)>0
  \] 
  hold.
  See \cite[Lemma 3]{fuji-circum}, for example, 
  to find out the shape of a conic in general form.

  In fact, we have
  \[ 
      P^2-4U\overline{U}=16(1- \vert a \vert^2)(1-\vert b \vert^2)
       \vert 1-a\overline{b} \vert^2(t^4-1)^2 >0 
  \]
  and
  \begin{align*}
    & P(-U\overline{V}^2+PV\overline{V}+4QU\overline{U}-\overline{U}V^2-QP^2)\\
    &\quad
      =32(1- \vert a \vert^2)^2(1- \vert b \vert^2)^2 \vert 1
       -a\overline{b} \vert^4(t^4-1)^2(t^2-1)^2 \\
    & \qquad 
      \times
     \Big(\big \vert (a-b)t^2+\overline{a}-\overline{b}\big \vert^2
      +2(1- \vert a \vert^2)(1- \vert b \vert^2)(t^4+1)\Big) >0.
  \end{align*}
  Therefore, \eqref{eq:gtildeI} represents a non-degenerate ellipse,
  and we have the assertion.
\end{proof}

From Lemma \ref{lemma:GtoS}, if we write the ellipse \eqref{eq:gtildeI}
in the standard form
 \[
       \vert z-f_1 \vert + \vert z-f_2 \vert =r, 
 \]
the foci $ f_1 $ and  $f_2 $ are determined as solutions of 
the following equation
\begin{align}\notag
 & (t^2+1)^2\zeta^2-(t^2+1)\big((a+b)t^2+(\overline{a}+\overline{b})\big)\zeta
  +(\overline{a}+t^2a)(\overline{b}+t^2b)
           -t^2(1-a\overline{a})(1-b\overline{b})=0.
         \label{eq:focif1f2}
\end{align}
The constant $ r $ is obtained from the following
\begin{equation}\label{eq:r2}
  r=\frac12  \vert f_1-f_2 \vert\sqrt{2+\Big\vert\frac{P}{U}\Big\vert} .
\end{equation}
We can express $ r $ as a solution to the following equation by foci
\begin{align} \notag
 R(r^2)= & t^4r^4
   +\big((1+t^4)t^2(f_1f_2+\overline{f_1}\overline{f_2}+2)
    -2(\overline{f_1}f_1+\overline{f_2}f_2)t^4-(1+t^4)^2\big)r^2\\
     \label{eq:R}
  & \quad
     +\big \vert (\overline{f_1}f_2-1)t^4-(f_2^2+\overline{f_1}^2-2)t^2
        +\overline{f_1}f_2-1\big \vert^2=0.
\end{align}
The above \eqref{eq:R} is equivalent to the equality
obtained by Cayley's criterion (see, for example, \cite{GH}). 
See Appendix for details.

\begin{remark}
Equation \eqref{eq:R}  always has two positive solutions,
as we see from the following argument.
Equation \eqref{eq:R} is a quadratic equation with real coefficients
in the variable $ r^2 $,
whose leading coefficient and constant term are positive.
Moreover, the coefficient of the term of degree $ 1 $ are negative,
as follows.

Substituting $ f_1=\rho_1e^{i\theta} $ and $ f_2=\rho_2e^{i\phi} $ 
for the coefficient of degree $ 1 $ and setting $ e^{i(\theta+\phi)}=e^{i\psi} $,
then from $ e^{i\psi}+e^{-i\psi}\leq 2 $, the following inequality holds. 
\begin{align}\notag
 &-\Big(t^8-\big(\rho_1\rho_2(e^{i\psi}+e^{-i\psi})+2\big)t^6 
   +2(\rho_1^2+\rho_2^2+1)t^4-(\rho_1\rho_2(e^{i\psi}+e^{-i\psi})+2)t^2+1\Big)\\
    \label{eq:fu}
 & \qquad \leq  -\Big(t^8-(2\rho_1\rho_2+2)t^6
             +2(\rho_1^2+\rho_2^2+1)t^4-(2\rho_1\rho_2+2)t^2+1\Big).
\end{align}
The right-hand side of the above equation \eqref{eq:fu}
is a quadratic equation with respect to $ \rho_1 $.
Since $ 0<\rho_1,\rho_2 <1 $,
the discriminant $ D $ satisfies
\begin{align*}
   D/4 &= t^4(1-t^2)^2\big((\rho_2^2-2)t^4+2\rho_2^2t^2+\rho_2^2-2\big)  
        = -t^4(1-t^2)^2\big(2(1+t^4)-\rho_2^2(1+t^2)^2 \big)\\
       &< -t^4(1-t^2)^2\big(2(1+t^4)-(1+t^2)^2 \big)
        = -t^4(1-t^2)^2(1-t)^2(1+t)^2<0.
\end{align*}
Therefore, the value of \eqref{eq:fu} is always negative,
and the coefficient of the term of degree $ 1 $ is also always negative.

Hence, we can see that equation \eqref{eq:R} has two positive real roots 
from the following three facts:
1. equation \eqref{eq:R} has at least one real root,
2. $ R(0)>0 $ holds,
and 
3. the axis of symmetry of parabola $ s=R(r^2) $ is in the positive part.
But, the following Lemma \ref{prop:1} shows the
just one root of \eqref{eq:R} gives the $3$-inscribed ellipse $ E_t $.
\end{remark}

The following Lemma is an extension of the result of Frantz
\cite[Proposition 3]{frantz} from the unit disk to a convex domain.
The method of proof is basically the same as that used in 
\cite[Proposition 3]{frantz}.

\begin{lemma}
 \label{prop:1}
Let $ \mathbb{E} $ be an elliptic disk.
For $ f_1,f_2\in\mathbb{E} $,
if there exists a $ 3 $-inscribed ellipse in $ \partial\mathbb{E} $
with foci $ f_1 $ and $ f_2 $, then it is uniquely determined.
\end{lemma}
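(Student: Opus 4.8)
The plan is to fix the foci and reduce the statement to a monotonicity property of an associated Poncelet return map. For $r>|f_1-f_2|$ let $C_r$ be the ellipse $|z-f_1|+|z-f_2|=r$ and $G_r=\{z:|z-f_1|+|z-f_2|\le r\}$ the closed convex region it bounds; the sets $G_r$ increase strictly with $r$, so there is a largest $r^\ast$ with $G_{r^\ast}\subset\overline{\mathbb{E}}$, and then $C_{r^\ast}$ is internally tangent to $\partial\mathbb{E}$. If a $3$-inscribed ellipse $C_r$ met $\partial\mathbb{E}$ at a point $Q$, some side of a circumscribing Poncelet triangle would pass through $Q$ and, being tangent to the strictly convex $C_r$ at the point $Q\in C_r$, would be the common tangent of $C_r$ and $\partial\mathbb{E}$ at $Q$ --- hence not a chord of $\partial\mathbb{E}$, which is impossible. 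Thus a $3$-inscribed ellipse with foci $f_1,f_2$ is some $C_r$ with $|f_1-f_2|<r<r^\ast$, and it suffices to show that at most one such $r$ makes $C_r$ a $3$-inscribed ellipse in $\partial\mathbb{E}$.

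Orient $\partial\mathbb{E}$, and for $r$ in that interval define the return map $T_r\colon\partial\mathbb{E}\to\partial\mathbb{E}$ that sends $P$ to the second intersection with $\partial\mathbb{E}$ of one of the two tangent lines from $P$ to $C_r$, the choice being made consistently in $P$ so that $T_r$ is an orientation-preserving homeomorphism. If $C_r$ is a $3$-inscribed ellipse, then by Poncelet's theorem (Theorem \ref{thm:PON}) every point of $\partial\mathbb{E}$ is a vertex of a Poncelet triangle, and choosing the triangle's orientation to agree with that of $\partial\mathbb{E}$ gives $T_r^3=\mathrm{id}$. Lift $T_r$ to an increasing $\widetilde T_r\colon\mathbb{R}\to\mathbb{R}$ with $\widetilde T_r(x+1)=\widetilde T_r(x)+1$; the lifted vertices of a triangle satisfy $x_0<x_1<x_2<x_0+1$ with $\widetilde T_r(x_0)=x_1$, $\widetilde T_r(x_1)=x_2$ and $\widetilde T_r(x_2)=x_0+1$, so the continuous integer-valued function $\widetilde T_r^3-\mathrm{id}$ is the constant $1$; that is, $\widetilde T_r^3(x)=x+1$ for all $x$.

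The crux is the following comparison: if $|f_1-f_2|<r<r'<r^\ast$, then $G_r$ is compactly contained in $G_{r'}$, so from each $P\in\partial\mathbb{E}$ the cone of rays from $P$ meeting $G_{r'}$ strictly contains the cone of those meeting $G_r$; hence $T_r(P)\neq T_{r'}(P)$, and by continuity the two points never exchange their cyclic order as $P$ varies, so one may pick compatible lifts with $\widetilde T_{r'}(x)<\widetilde T_r(x)$ for all $x$. Since both lifts are increasing, induction gives $\widetilde T_{r'}^n(x)<\widetilde T_r^n(x)$ for every $n\geq1$. If $C_r$ and $C_{r'}$ were both $3$-inscribed ellipses in $\partial\mathbb{E}$, we would obtain $x+1=\widetilde T_{r'}^3(x)<\widetilde T_r^3(x)=x+1$, a contradiction; hence $r=r'$. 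Here Poncelet's theorem is genuinely used to upgrade the existence of a single triangle to the exact identity $T_r^3=\mathrm{id}$: plain monotonicity of the rotation number would not suffice, since it can be locally constant in $r$.

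The step I expect to be the main obstacle is the uniform strict comparison $\widetilde T_{r'}(x)<\widetilde T_r(x)$. Its geometric content --- enlarging the inner ellipse makes the Poncelet step rotate strictly less --- is intuitively clear and reduces to convex geometry (a tangent line to $C_r$ enters the interior of $G_{r'}$), but making it rigorous requires a careful treatment of the orientation convention defining $T_r$, in particular checking that the chosen tangent line depends continuously on both $P$ and $r$ so that $T_r$ and $T_{r'}$ never swap their relative position along $\partial\mathbb{E}$.
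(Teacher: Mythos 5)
Your argument is correct, and it formalizes the same geometric kernel as the paper --- strictly nested confocal ellipses have strictly nested tangent cones from any exterior point --- but packages it quite differently. The paper's proof is a one-step comparison at a single shared vertex: if $r'<r$, the tangent cone from $z_1$ to the smaller ellipse is strictly narrower, so the other two contact points $z_2',z_3'$ of the would-be triangle land on the far arc and, by convexity of $\mathbb{E}$, the chord $[z_2',z_3']$ lies beyond the side $[z_2,z_3]$ and hence cannot touch the smaller ellipse; Poncelet's porism (Theorem~\ref{thm:PON}) is then invoked exactly as you invoke it, to reduce to a triangle with a prescribed vertex. You instead promote the comparison to the full Poncelet return map $T_r$ on $\partial\mathbb{E}$, derive $\widetilde T_r^{\,3}=\mathrm{id}+1$ from the porism, and contradict the pointwise strict inequality between the lifts after three iterations. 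Your route is heavier (one must verify that $T_r$ is a well-defined orientation-preserving homeomorphism and fix compatible lifts, which you rightly flag as the delicate point), but it buys generality: the argument applies verbatim to $n$-inscribed ellipses for any $n$, whereas the paper's "third side misses" picture is specific to triangles. Two small remarks: the sign of your comparison is almost certainly reversed --- enlarging the inner ellipse makes the tangent chord wrap \emph{farther} around, so $\widetilde T_{r'}(x)>\widetilde T_r(x)$ --- but this is immaterial, since only strictness and constancy of the sign are used; and the opening reduction via $r^\ast$ and the degenerate-tangency case, while harmless, is not needed once one grants that a $3$-inscribed ellipse admits genuine circumscribing triangles, which already forces it into the open interior.
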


\begin{proof}
Let $ \widetilde{E} $ be a $ 3 $-inscribed ellipse in $ \partial\mathbb{E} $,
and $ \widetilde{\mathbb{E}} $ the bounded elliptic domain
with boundary $ \widetilde{E} $.

Let $ \vert z-f_1 \vert+\vert z-f_2 \vert =r $
be the standard form of $ \widetilde{E} $.
Assume that there exist three points $ z_1,z_2,z_3 $ 
on the ellipse $ \partial\mathbb{E} $  such that
the triangle $ \triangle(z_1,z_2,z_3) $ is
circumscribed about $ \widetilde{E} $.
From this assumption, $ \widetilde{E} $ is inscribed in 
the angular region $ A $ defined by the angle
$ \angle(z_3,z_1,z_2) $.

For $ r' $ with $ r'<r $, consider the ellipse $ \widetilde{E}' $
defined by $  \vert z-f_1 \vert + \vert z-f_2 \vert =r' $.
Clearly,
$ \widetilde{\mathbb{E}}'\subset \widetilde{\mathbb{E}} $.
Suppose $ \widetilde{E}' $ is inscribed in the angular region
defined by the angle $ A'=\angle(z_3',z_1,z_2') $
with $ z_1 $ as the vertex.
Then, $ A'\subset A $ holds.

The two points $ z_2' $ and $ z_3' $ can be chosen as
points on  $ \partial{\mathbb{E}} $, 
but $ \mathbb{E} $ is a convex set,
so the segment $ [z_2',z_3'] $ is outside the
triangle $ \triangle(z_1,z_2,z_3) $.
Hence, the line segment $ [z_2',z_3'] $ can never be
tangent to $ \widetilde{E}' $ (see, Figure \ref{pic:setu}).

\begin{figure}[htbp]
\centerline{\includegraphics[width=0.4\linewidth]{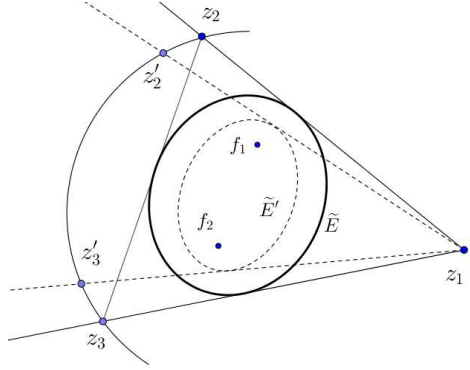}}
\caption{The case of $ \widetilde{\mathbb{E}}'\subset\widetilde{\mathbb{E}}$.}
\label{pic:setu}
\end{figure}
In the case of $ r'>r $, we can show the assertion similarly.
\end{proof}

\begin{remark}\label{rem:contraction}
  As we mentioned before, the Joukowski transformation $ z=\varphi_t(w) $ 
  is a continuous on $ \partial\mathbb{D} $.
  We note that the interior curve of $ B_{\varphi_t} $ is determined only
  by values on $ E_t $, just as the interior curve of a Blaschke product
  $ B $ is determined only by values on the unit circle.
  On the unit circle, $ \varphi_t $ is determined by \eqref{map:phi}.
  Therefore $ \varphi_t $ maps each point on the unit circle
  to a point that is multiplied by $ (t^2-1)/(1+t^2) $
  in the vertical direction,
  \[
    \begin{array}{lccc}
      \varphi_t\vert_{\partial\mathbb{D}} \,:\, 
                 & \partial\mathbb{D} & \to & E_t\\
                 &   \rin              &             &\rin\\
                 &  X+iY              &\mapsto & X+i(t^2-1)/(1+t^2)Y  \\
                 &  w           & \mapsto & \dfrac{t^2w+\overline{w}}{1+t^2}.\\
    \end{array}
  \]
  Note that $  \varphi_t\vert_{\partial\mathbb{D}} $ determines
  a linear transformation of  $ \mathbb{R}^2 $.
  This linearity is also inherited in the correspondence between
  the triangle inscribed in the unit circle 
  and the triangle inscribed  in $ E_t $.
  The same is true for the interior curves.

  In fact, the correspondence of the interior curves is as follows.

  From Theorem \ref{thm:1}, the interior curve associated with
  $ B(z)=z(z-a)(z-b)/((1-\overline{a}z)(1-\overline{b}z)) $ is
  given by 
  \[
      \vert w-a \vert + \vert w-b \vert = \vert 1-\overline{a}b \vert ,
  \]
  which can also be represented as
  \begin{align} \notag
     g_I(w)&=
      (\overline{a}-\overline{b})^2w^2+(a-b)^2\overline{w}^2
      -2\big(2(1+ \vert ab \vert^2)- \vert a+b \vert^2\big)
      w\overline{w} \\ \notag
    & +2\big((1+ \vert ab \vert^2)(\overline{a}+\overline{b})
      -(\overline{a}^2+\overline{b}^2)(a+b)\big)w \\ \notag
    &
       +2\big((1+ \vert ab \vert^2)(a+b)
       -(a^2+b^2)(\overline{a}+\overline{b})\big)\overline{w} \\ \label{eq:gI}
    &+(1- \vert ab \vert )^2
      - \vert a+b \vert^2\big(2(1+ \vert ab \vert^2)
      - \vert a+b \vert^2\big)=0
  \end{align}
  in the general form.
  The ellipse determined by equation  \eqref{eq:gtildeI}
  coincides with the ellipse determined from \eqref{eq:gI}
  multiplied by $ (t^2-1)/(1+t^2) $ in the
  vertical direction (in the direction of the imaginary axis) only.
  Thus, $ g_I^{\varphi_t}(z)=0 $ is equivalent to
  \[
       g_I\Big(\frac{\overline{z}-t^2{z}}{1-t^2}\Big)=0.
  \]
  In fact, 
  $(t^2-1)^2g_I\big((\overline{z}-t^2{z})/({1-t^2})\big)={g}_I^{\varphi_t}(z) $
  holds.
\end{remark}

%

\begin{proof}[Proof of Corollary \ref{cor:unique}]
  For $ C_2 $ a $3$-inscribed ellipse in $ E_t $,
  let $ C'_2 $ be the ellipse that is multiplied by
  $ (1+t^2)/(t^2-1) $  in the vertical direction.
  From Theorem \ref{thm:Fra}, there exists a Blaschke product $ B $
  having $ C'_2 $ as its interior curve.
  Then, the interior curve of Blaschke-like map 
  associated with $ B $ and $ \varphi_t $ coincides with $ C_2 $.

  The converse follows from Theorem \ref{thm:daen}.
\end{proof}

R. Schwartz and S. Tabachnikov \cite{taba} studied the loci of centers of 
mass for one-parameter families of Poncelet $ n $-polygons.
In particular, they show that the locus of centers of mass for 
the vertices of each Poncelet $ n $-polygon is an ellipse 
similar to the outer ellipse.

For the polygons created by the inverse images of a Blaschke-like map,
the locus of centers of mass for the vertices forms an ellipse similar to
the outer ellipse, even if the interior curve does not contain
any conics.

\begin{proof}[Proof of Proposition \ref{thm:jushin}]
Let $ B $ be a canonical Blaschke product
$ B(w)=w\prod_{k=1}^{d-1}(w-a_k)/(1-\overline{a}_kw) $ of degree $ d $,
and $ M(B) $  the locus of centers of mass for the vertices of 
each polygons given by 
$ B^{-1}(\lambda) $ with $ \lambda\in\partial\mathbb{D} $.

First, we show that if $ B $ was a canonical Blaschke product then
$ M(B) $ forms a circle.

For $ \lambda\in\partial\mathbb{D} $, 
let $ w_1,\cdots,w_d\in\partial\mathbb{D} $ be the $ d $
distinct points mapped to $ \lambda $ under $ B $, where they are assumed to
satisfy $ 0\leq \arg w_1,<\arg w_2<\cdots<\arg w_d<2\pi $.
As the points $ w_1,\cdots,w_d $ are the solution of $ B(w)=\lambda $,
we have
\begin{align*}
 (w-w_1)(w-w_2)\cdots(w-w_d)
  &=w(w-a_1)\cdots(w-a_{d-1})
   -\lambda(1-\overline{a}_1w)\cdots(1-\overline{a}_{d_1}w)\\
  &= w^d-\big((a_1+\cdots+a_{d-1})
      +(-1)^{d-1}\overline{a}_1\cdots\overline{a}_{d-1}\lambda\big)w^{d-1}+\cdots\\
  & \qquad
      +\big((-1)^{d-1}a_1\cdots a_d+(\overline{a}_1+\cdots+\overline{a}_{d-1})
       \lambda\big) w-\lambda.
\end{align*}
Set $ \zeta=(w_1+\cdots+w_d)/d $.
Comparing each term of degree $ d-1, 1 $ and $ 0 $ for $ w $
in the above equation, we have
\begin{align} \label{coef:d-1}
 & \zeta d = a_1+\cdots+a_{d-1}
            +(-1)^{d-1}\overline{a}_1\cdots\overline{a}_{d-1}\lambda, \\
              \label{coef:1}
 & (-1)^{d-1}w_1\cdots w_d\big(\frac{1}{w_1}+\!\cdots\!+\frac{1}{w_d}\big)
         = (-1)^{d-1}a_1\cdots a_{d-1}
            +(\overline{a}_1+\!\cdots\!+\overline{a}_{d-1})\lambda,\\
            \label{coef:0}
 & (-1)^{d-1}w_1\cdots w_d =\lambda.
\end{align}
From \eqref{coef:1} and \eqref{coef:0},
\[
    \lambda(\overline{w_1}+\cdots+\overline{w_d})
   =(-1)^{d-1}a_1\cdots a_{d-1}
    +(\overline{a}_1+\cdots+\overline{a}_{d-1})\lambda
\]
holds.
Dividing both sides of the above equation by $ \lambda $, 
and substituting \eqref{coef:d-1}, we have
\begin{equation}\label{eq:bar}
  \overline{\zeta}d=(-1)^{d-1}a_1\cdots a_{d-1}\overline{\lambda}
    +(\overline{a}_1+\cdots+\overline{a}_{d-1}).
\end{equation}
From $  \vert \lambda \vert^2=\lambda\overline{\lambda}=1 $,
eliminating $ \lambda $ from \eqref{coef:d-1} and \eqref{eq:bar}, we have
\[
    \Big \vert \frac{\zeta d-(a_1+\cdots+a_{d-1})}
              {\overline{a}_1\cdots\overline{a}_{d-1}}\Big \vert =1. 
\]
Therefore, $ \zeta $ satisfies
\begin{equation}\label{eq:en}
    \Big \vert \zeta-\frac{a_1+\cdots+ a_{d-1}}{d}\Big \vert 
         =\Big \vert \frac{a_1\cdots a_{d-1}}{d}\Big \vert ,
\end{equation}
and $ \zeta $ is on the circle with center $ (a_1+\cdots+a_{d-1})/d $
and radius $\vert a_1\cdots a_{d-1}\vert/d $.
Note that if the Blaschke product $ B $ has multiple zero at $ z=0 $,
the barycenter degenerate to a single point.

Next, we consider the locus $ M(B_{\varphi_t}) $
for the Blaschke-like map with respect to $ B $ and $ \varphi_t $.
For the same reasons discussed in Remark \ref{rem:contraction},
the locus $ M(B_{\varphi_t}) $ is obtained by 
contracting the above circle \eqref{eq:en} in the horizontal direction.

Since the outer unit circle is also contracted horizontally 
in the same ratio and transforms into an ellipse,
these two ellipses are similar.
\end{proof}

\subsubsection{The exterior curve of $ B_{\varphi_t} $}\

Let $ B $ be a canonical Blaschke product of degree $ d $.
For $ \lambda\in\partial \mathbb{D} $, let $ \mathcal{L}_{\lambda} $
be the set of $ d $ lines tangent to $ \partial\mathbb{D} $
at the $ d $  preimages of $ \lambda $ under $ B $.
Then the trace of the intersection points of each two elements in
$ \mathcal{L}_{\lambda} $ as $ \lambda $ ranges over the unit circle is called
the {\it exterior curve associated with} $ B $.

In \cite {fuji-circum}, we obtained the following result.

\begin{lemma}[{\cite[Theorem 2]{fuji-circum}}]\label{thm:d-1}
  For a canonical Blaschke product of degree $ d $,
  the exterior curve is an algebraic curve of degree at most $ d-1 $.
\end{lemma}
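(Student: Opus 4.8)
The plan is to work directly with the tangent lines to $\partial\mathbb{D}$ and count degrees. Fix $\lambda\in\partial\mathbb{D}$ and let $w_1,\dots,w_d$ be its preimages under $B$. Since each $w_k$ lies on the unit circle, the tangent line to $\partial\mathbb{D}$ at $w_k$ has the explicit equation $z+w_k^2\,\overline{z}=2w_k$. For a pair $j\neq k$, the intersection point of the two tangent lines is obtained by solving the linear $2\times 2$ system in $z,\overline z$; one finds the classical formula $z_{jk}=\dfrac{2w_jw_k}{w_j+w_k}$ (the ``harmonic'' point). So the exterior curve is the locus of $\dfrac{2w_jw_k}{w_j+w_k}$ as $\lambda$ ranges over $\partial\mathbb{D}$ and $j<k$.

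Next I would translate ``$w_1,\dots,w_d$ are the preimages of $\lambda$'' into algebra. Writing $B(w)=w\prod_{k=1}^{d-1}(w-a_k)/(1-\overline a_k w)$, the equation $B(w)=\lambda$ becomes a polynomial identity $w\prod(w-a_k)-\lambda\prod(1-\overline a_k w)=\prod_{k=1}^d(w-w_k)$, so the elementary symmetric functions $\sigma_1,\dots,\sigma_d$ of the $w_k$ are affine-linear in $\lambda$ (in fact $\sigma_m=c_m+\lambda\,c_m'$ for constants depending only on the $a_k$, with $\sigma_d=(-1)^{d-1}\lambda$). The key observation is then that the exterior curve is the image, under the rational map $(w_j,w_k)\mapsto 2w_jw_k/(w_j+w_k)$, of the algebraic set cut out in $w$-space by these symmetric-function constraints; equivalently, eliminating $\lambda$ and the $w_i$ ($i\neq j,k$) from the system
\[
  \sigma_m(w_1,\dots,w_d)=c_m+\lambda c_m'\ (m=1,\dots,d),\qquad
  (w_j+w_k)z=2w_jw_k
\]
yields a single polynomial equation $g_E(z,\overline z)=0$, where the conjugate variable $\overline z$ enters because the $w_k$ lie on $|w|=1$, i.e. $\overline{w_k}=1/w_k$, giving a companion relation $(w_j+w_k)\overline z\,w_jw_k=2w_jw_k$, hence $(w_j+w_k)\overline z=2$. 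Combining the two linear relations in $z,\overline z$ gives $w_j+w_k=2/\overline z$ and $w_jw_k=z/\overline z\cdot\big(w_j+w_k\big)/2=z/\overline z^{\,2}\cdot$(something), so $s:=w_j+w_k$ and $p:=w_jw_k$ are themselves rational in $z,\overline z$.

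From here the degree bound should follow by a parametrisation/resultant count. Since $s=w_j+w_k$ and $p=w_jw_k$ are determined by $z,\overline z$, each remaining $w_i$ with $i\neq j,k$, together with $\lambda$, is constrained by the $d$ symmetric-function equations; generically this is a finite system, and eliminating the $d-1$ unknowns $\{w_i:i\neq j,k\}\cup\{\lambda\}$ against $d$ equations produces a curve whose degree in $(z,\overline z)$ is governed by how these equations depend on $s,p$ — and each $\sigma_m$ is a polynomial of degree $\le 1$ in $p$ and degree $\le 1$ in $s$ after substituting the remaining variables, so the resultant has degree at most $d-1$ in the pair $(z,\overline z)$. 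The main obstacle I expect is making this elimination-degree bookkeeping rigorous and uniform in $d$: one must argue that the Sylvester/Macaulay resultant eliminating $\lambda$ and $w_i$ $(i\neq j,k)$ really does have total degree $\le d-1$ in $z,\overline z$ after clearing the denominators $w_j+w_k$, and that the spurious factors (the vanishing of $w_j+w_k$, or coincidences $w_i=w_j$) can be divided off without affecting the bound. An inductive or generating-function argument tracking the $\lambda$-degree and the $(s,p)$-degree of the iterated resultants is probably the cleanest route; alternatively, one can invoke that the exterior curve is the dual (in a suitable projective sense) of the interior curve's construction and transfer a known degree bound, but the direct elimination argument is self-contained and is the one I would carry out.
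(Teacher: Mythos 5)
You should first note that the paper does not prove this lemma at all: it is imported verbatim as \cite[Theorem 2]{fuji-circum}, so there is no in-paper argument to compare against. Judged on its own, your plan starts correctly — the tangent line at $w_k\in\partial\mathbb{D}$ is $z+w_k^2\overline z=2w_k$, the intersection point is $2w_jw_k/(w_j+w_k)$, and the relations $w_j+w_k=2/\overline z$, $w_jw_k=z/\overline z$ are exactly the right substitution. But the final step is a genuine gap, not just unfinished bookkeeping. You propose to eliminate $\lambda$ together with all the remaining preimages $w_i$ ($i\neq j,k$) from the $d$ symmetric-function equations by iterated resultants, and assert the eliminant has degree at most $d-1$ in $(z,\overline z)$. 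Nothing in the plan supports that: multivariate or iterated Sylvester resultants of $d$ equations in $d-1$ unknowns, with coefficients depending on $(z,\overline z)$, generically have degrees that grow multiplicatively (B\'ezout-type bounds), and you would have to identify and divide out large extraneous factors to get back down to $d-1$ — precisely the step you flag as the obstacle. As written, the claimed degree bound does not follow.

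The missing idea is that the auxiliary unknowns need not be eliminated at all. A pair $(w_j,w_k)$ of distinct unimodular points are preimages of a common $\lambda\in\partial\mathbb{D}$ if and only if $B(w_j)=B(w_k)$ (the common value is automatically unimodular), i.e., writing $B=P/Q$ with $\deg P=d$, $\deg Q=d-1$, if and only if
\[
  F(w_j,w_k):=P(w_j)Q(w_k)-P(w_k)Q(w_j)=0 .
\]
This single equation already eliminates $\lambda$ and every other $w_i$. Since $F$ is antisymmetric, $F=(w_j-w_k)\,G$ with $G$ symmetric, hence $G$ is a polynomial in $s=w_j+w_k$ and $p=w_jw_k$; because $\deg_{w_j}F\le d$, one gets $\deg_{w_j}G\le d-1$, so every monomial $s^ap^b$ of $G$ satisfies $a+b\le d-1$. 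Substituting $s=2/\overline z$, $p=z/\overline z$ and multiplying by $\overline z^{\,d-1}$ turns $s^ap^b$ into $2^a z^b\overline z^{\,d-1-a-b}$, of total degree $d-1-a\le d-1$. This yields the polynomial equation of the exterior curve with the stated degree bound and is the argument your plan should be replaced by (it is also essentially the content of the cited Theorem 2 of \cite{fuji-circum}).
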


For a Blaschke-like map, we have the following result.

\begin{proposition}\label{prop:d-1}
  For a Blaschke-like map associated with a canonical Blaschke product
  $ B $ of degree $ d $ and the map $ \varphi_t $,
  the exterior curve is an algebraic curve of degree at most $ d-1 $.
\end{proposition}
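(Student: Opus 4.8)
The plan is to reduce Proposition \ref{prop:d-1} to Lemma \ref{thm:d-1} by exploiting the fact that, \emph{on the boundary}, $\varphi_t$ agrees with a global real-linear map. Recall from Remark \ref{rem:contraction} that the restriction $\varphi_t|_{\partial\mathbb{D}}$ coincides with the restriction to $\partial\mathbb{D}$ of the $\mathbb{R}$-linear map $\Phi:\mathbb{C}\to\mathbb{C}$ given by $\Phi(X+iY)=X+i\frac{t^2-1}{1+t^2}Y$. Since $0<t<1$, the factor $\frac{t^2-1}{1+t^2}$ is nonzero, so $\Phi$ is invertible; it sends $\partial\mathbb{D}$ onto $E_t$, and its inverse on $E_t$ is exactly the map $z\mapsto(\overline{z}-t^2z)/(1-t^2)$ of \eqref{map:phiIV}. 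Because the exterior curve of a Blaschke-like map is determined solely by the boundary values of the map and by the tangent lines to the boundary conic, we may work entirely with $\Phi$ in place of $\varphi_t$.

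First I would record the elementary fact that an invertible affine map of $\mathbb{R}^2$ carries tangent lines of a smooth curve to tangent lines of the image curve, and carries the intersection point of two lines to the intersection point of their images. Applying this to $\Phi$: for $\widetilde{\lambda}\in E_t$, if $w_1,\dots,w_d\in\partial\mathbb{D}$ are the preimages of $\lambda=\Phi^{-1}(\widetilde{\lambda})$ under $B$, then $z_k=\Phi(w_k)$ are precisely the preimages of $\widetilde{\lambda}$ under $B_{\varphi_t}$ (this is the computation in the proof of Theorem \ref{thm:daen}, using $\varphi_t|_{\partial\mathbb{D}}=\Phi|_{\partial\mathbb{D}}$), the tangent line to $E_t$ at $z_k$ is the $\Phi$-image of the tangent line to $\partial\mathbb{D}$ at $w_k$, and hence each intersection point of two such tangent lines to $E_t$ is the $\Phi$-image of the corresponding intersection point for $B$. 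As $\widetilde{\lambda}$ ranges over $E_t$, $\lambda=\Phi^{-1}(\widetilde{\lambda})$ ranges over $\partial\mathbb{D}$, so the exterior curve of $B_{\varphi_t}$ is exactly the image under $\Phi$ of the exterior curve associated with $B$.

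Finally, since $\Phi$ is $\mathbb{R}$-affine and invertible, it preserves the degree of a real algebraic curve in $\mathbb{R}^2\cong\mathbb{C}$: if $F(x,y)=0$ cuts out the exterior curve of $B$ with $\deg F\le d-1$, then $F(\Phi^{-1}(x,y))=0$ cuts out the exterior curve of $B_{\varphi_t}$ and has the same degree. By Lemma \ref{thm:d-1} the exterior curve of $B$ has degree at most $d-1$, so the same bound holds for the exterior curve of $B_{\varphi_t}$, which is the assertion.

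I expect the only point requiring care to be the verification that the exterior curve genuinely depends only on the boundary restriction $\varphi_t|_{\partial\mathbb{D}}$ and on the tangent lines to $E_t$ — i.e. that replacing the nonlinear rational map $\varphi_t$ by the linear map $\Phi$ changes nothing in the construction. Once this is pinned down, the remainder is the routine observation that linear maps preserve degrees of algebraic curves, and no Gröbner basis elimination (as in the proof of Theorem \ref{thm:daen}) is needed. Alternatively, one could mimic the computation behind Lemma \ref{thm:d-1} directly, writing the tangent line to $E_t$ at $z_k=\varphi_t(w_k)$ in terms of $w_k$ and eliminating the elementary symmetric functions of $w_1,\dots,w_d$ via Vieta's formulas \eqref{eq:Bw}, but the linear-reduction argument is shorter and conceptually cleaner.
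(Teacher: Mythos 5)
Your proposal is correct and follows essentially the same route as the paper: both arguments show that the exterior curve of $B_{\varphi_t}$ is the image of the exterior curve of $B$ under the real-linear map $w\mapsto (t^2w+\overline{w})/(1+t^2)$ and then invoke Lemma \ref{thm:d-1} together with the fact that an invertible affine map preserves the degree of an algebraic curve. The only difference is one of presentation: the paper verifies the key identity \eqref{eq:zz} by explicitly writing down the tangent lines to $\partial\mathbb{D}$ and to $E_t$ and computing their intersection points, whereas you deduce the same identity from the general principle that an invertible affine map carries tangent lines to tangent lines and intersection points to intersection points.
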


\begin{proof}
Since the equations of lines tangent to the unit circle at points
$ \omega_1 $ and $ \omega_2 $ are
\[
  \frac{1}{\omega_k}w+\omega_k\overline{w}-2=0\quad (k=1,2),
\]
the intersection point $ w_o $ of these two lines is given by
\begin{equation}\label{eq:ww}
  w_0=\frac{2\omega_1\omega_2}{\omega_1+\omega_2}.
\end{equation}
Note that equality \eqref{eq:ww} is equivalent to
$ \overline{w_0}=2/(\omega_1+\omega_2) $.

The equation of the line tangent to a conic $ C $
  \[
    C\,:\  \overline{u}z^2+pz\overline{z}+u\overline{z}^2
          +\overline{v}z+v\overline{z}+q=0
  \]
at point $ z_0\in C $ can be written as
  \[
      (2\overline{u}z_0+p\overline{z_0}+\overline{v})z
           +(2u\overline{z_0}+pz_0+v)\overline{z}
               +\overline{v}z_0+v\overline{z_0}+2q=0.
  \]
Therefore, the equation of the lines tangent to the ellipse $ E_t $
at points $ \zeta_k=\varphi_t(\omega_k) \ (k=1,2) $ are
\[
   (\omega_k^2-t^2)z-(t^2\omega_k^2-1)\overline{z}+2(t^2-1)\omega_k=0.
\]
The intersection point $ z $ of these two lines is given by 
\begin{equation}\label{eq:zz}
  z=\frac{2(\omega_1\omega_2t^2+1)}{(t^2+1)(\omega_1+\omega_2)}
   =\dfrac1{1+t^2}\Big(t^2w_0+\overline{w_0}\Big),
\end{equation}
where the second equality is obtained by \eqref{eq:ww}.
Relation \eqref{eq:zz} gives the point $ z $ on the exterior curve of
$ B_{\varphi_t} $ corresponding to each point $ w_0 $ on the exterior curve
of $ B $.
Hence, the assertion follows from Lemma \ref{thm:d-1}.
\end{proof}

\section{Outside of parabolic regions}
\subsection{General form and standard form}\label{sec:parab}\

A conformal map
\[
  z=\psi_t(w)=\Big(\frac{1-w}{1+w}+t\Big)^2-t^2 \quad (t>0)
\]
maps conformally the unit disk in the $ w $-plane onto 
the domain $ \mathbb{P}_t $ 
in the $ z $-plane.
Then, the boundary $ P_t=\partial\mathbb{P}_t $ is the parabola whose
focus is $ -t^2 $, and the directrix is given by $ w=t^2 $.
The parabola $ P_t $ is written as
\[
  \dfrac{\vert z+\overline{z}-2t^2 \vert }{2} 
    = \vert z+t^2 \vert , \mbox{\quad or\quad }
  (z-\overline{z})^2=8t^2(z+\overline{z}).
\]
Since $ \psi_t\big((1+t+tw)/(w-t-tw)\big)=\psi_t(w) $ holds,
the map $ \psi_t $ conformally maps
$ \{\vert 1+t(1+w)\vert < \vert w-t(1+w)\vert\} $ onto 
$ \widehat{\mathbb{C}}\setminus\overline{\mathbb{P}}_t $ as well as
conformally maps $ \mathbb{D} $ onto 
$ \widehat{\mathbb{C}}\setminus\overline{\mathbb{P}}_t $.
Note that as the inequality
$ \vert 1+t(1+w)\vert < \vert w-t(1+w)\vert $ is written as
\[
   \begin{cases}
   \left\vert w-\dfrac{2t}{1-2t}\right\vert >\dfrac{1}{1-2t}, 
                      & \mbox{if} \ 0<t<\frac12,\\
   \mbox{Re}\, w < -1 & \mbox{if} \ t=\dfrac12,\\
   \left\vert w-\dfrac{2t}{1-2t}\right\vert <\dfrac{1}{1-2t} 
                      & \mbox{otherwise},
   \end{cases}
\]
so for each $ t>0 $ the domain defined by this inequality 
never intersects $ \mathbb{D} $.
Therefore, for each 
$ z\in\widehat{\mathbb{C}}\setminus\overline{\mathbb{P}}_t $
we can choose a unique branch $ \psi_t^{-1} $ that satisfies $ \vert w\vert<1 $.
Then, the map $ B_{\psi_t} $ maps 
$ \widehat{\mathbb{C}}\setminus\overline{\mathbb{P}}_t $ onto itself,
and is a  Blaschke-like map associated with $ B $ and $ \psi_t $
(see Figure \ref{pic:parab-all}).

\begin{figure}[htbp]
  \centering
  \includegraphics[width=0.7\linewidth]{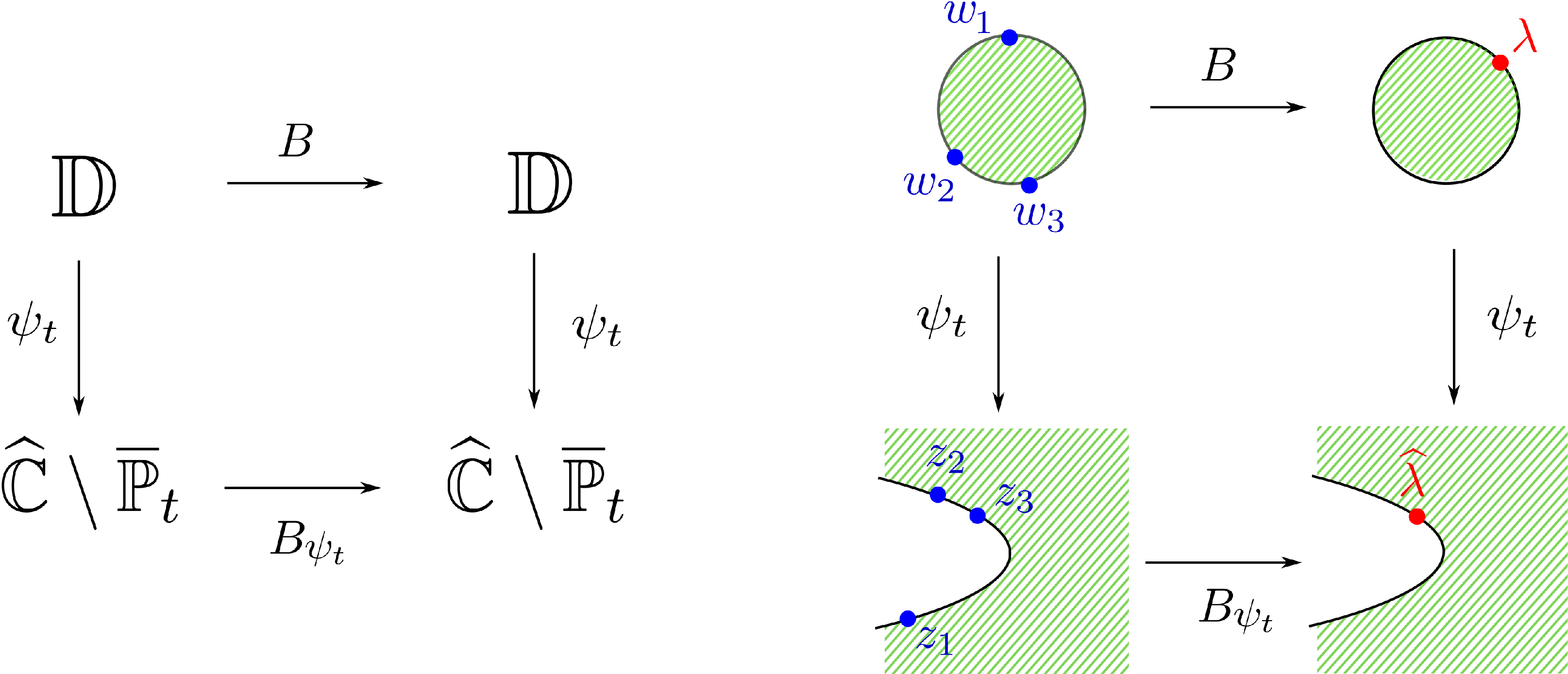}
  \caption{The Blaschke-like map $ B_{\psi_t}=\psi_t\circ B\circ\psi_t^{-1}$.}
  \label{pic:parab-all}
\end{figure}

\subsection{The interior curve of $ B_{\psi_t} $}\

As we described in Section \ref{sec:DGM},
let $ B $ be a canonical Blaschke product of degree $ 3 $
\[
   B(w)=w\frac{w-a}{1-\overline{a}w}\frac{w-b}{1-\overline{b}w} 
   \quad (a,b\in\mathbb{D}), 
\]
and $ B_{\psi_t}=\psi_t\circ B\circ \psi_t^{-1} $ the Blaschke-like map 
associated with $ B $ and $ \psi_t $.

For $ \lambda\in\partial\mathbb{D} $, let $ w_1,w_2,w_3 $ 
be the three points mapped to $ \lambda $ under $ B $, and
set
\[ 
   \widehat{\lambda}=\psi_t(\lambda), \qquad
   z_k=\psi_t(w_k) \quad (k=1,2,3).
\]
Then, $ B_{\psi_t} $ maps each $ z_k $ ($ k=1,2,3$) to $ \widehat{\lambda} $.

Here, we show the interior curve with respect to $ B_{\psi_t} $ forms
an ellipse.
  
\begin{proof}[Proof of Theorem \ref{thm:para}]
This theorem can be proved using the same method as the proof of 
Theorem \ref{thm:daen}.
The $3$-inscribed conic $g_I^{\psi_t}(z)=0 $ is given as follows
\begin{equation}\label{eq:ghat}
   g_I^{\psi_t}(z)=\overline{U}z^2+Pz\overline{z}+U\overline{z}^2
       +\overline{V}z+V\overline{z}+Q=0,
\end{equation}
where
\begin{align*}
 U& =\big(( \vert ab \vert^2- \vert a+b+1 \vert^2)^2-4 \vert a
      +1 \vert^2 \vert b+1 \vert^2\big)t^2 \\
    &\quad
      +2\big( \vert ab \vert^2(a+b-\overline{a}-\overline{b})
      -(\overline{a}+\overline{b}+1)(a^2+b^2+1) 
      +(a+b+1)(\overline{a}^2+\overline{b}^2+1)
       +2(ab-\overline{a}\overline{b})\big)t \\
    & \quad
     +(a-b)^2+(\overline{a}-\overline{b})^2-2 \vert a+b \vert^2
      +4( \vert ab \vert^2+1), \\
 P&=2\big(( \vert ab \vert^2- \vert a+b+1 \vert^2)^2 -4 \vert a+1 \vert^2
          \vert b+1 \vert^2\big)t^2\\
   & \quad
     -2\big(4 \vert ab \vert^2+(a-b)^2+(\overline{a}-\overline{b})^2
     -2 \vert a+b \vert^2+4\big),\\
 V&=-4\big( \vert ab \vert^2( \vert ab \vert^2-2 \vert a+b \vert^2+2)
      +( \vert a+b \vert^2-2)^2
      -(a-b)^2-(\overline{a}-\overline{b})^2+1\big)t^2 \\
    & \quad
     -4\big(( \vert ab \vert^2- \vert a+b-1 \vert^2)
         (a+b-\overline{a}-\overline{b})
      +2(\overline{a}+\overline{b}-2)(ab-1)
     -2(a+b-2)(\overline{a}\overline{b}-1)
     \big)t,\\
 Q&=4\big(( \vert ab \vert^2- \vert a+b-1 \vert^2)^2
      -4 \vert a-1 \vert^2 \vert b-1 \vert^2\big)t^2.
\end{align*}

We can check that the equation $g_I^{\psi_t}(z)=0 $ gives the equation of
an ellipse. In fact,
\begin{align*}
 & P^2-4U\overline{U}=\\
 & \quad
   -64(1- \vert b \vert^2)(1- \vert a \vert^2) \vert 1-a\overline{b} \vert^2t^2
            \big((1- \vert b \vert^2)(1- \vert a \vert^2)
         -4(\mbox{Re}(a)+1)(\mbox{Re}(b)+1)\big)
\end{align*}
holds. Substituting $ a=a_r+a_ii,b=b_r+b_ii $ for the last factor
of the right side of the above equation, we have
\begin{align*}
   &\big(1-(b_r^2+b_i^2)\big)\big(1-(a_r^2+a_i^2)\big)-4(1+a_r)(1+b_r) \\
   &\quad
      < (1-b_r^2)(1-a_r^2)-4(1+a_r)(1+b_r)
      =(1+b_r)(1+a_r)\big((1-b_r)(1-a_r)-4\big) <0.
\end{align*}
Therefore, $ P^2-4U\overline{U} >0 $ holds and 
$g_I^{\psi_t}(z)=0 $ gives an equation of an ellipse or its degeneration.
\end{proof}

\begin{remark}\label{rem:para}
  The map $ z=\psi_t(w) $ is a continuous 
  map on $ \overline{\mathbb{D}} $, and on $ \partial\mathbb{D} $
  it can be written 
  \begin{equation}\label{map:psi}
   z=\psi_t(w)=\frac{(1-2t)w+(1+2t)\overline{w}-2}{w+\overline{w}+2} 
      \qquad (w\in\partial\mathbb{D})
  \end{equation}
  since $ w\overline{w}=1 $ holds.
  Therefore, on $ P_t $, the inverse can be expressed as 
  \begin{equation}\label{map:psiIV}
         w=\psi^{-1}_t(z)=\frac{-(z+\overline{z}+2)t+z-\overline{z}}
                        {(z+\overline{z}-2)t}\qquad (z\in P_t). 
  \end{equation}
  That is, the map defined by \eqref{map:psi}  maps each point 
  $ w=X+iY $ on $ \partial\mathbb{D} $ to
  the point $ z=(X-1)/(X+1)-2iYt/(X+1) $ on $ P_t $,
  \[
    \begin{array}{lccc}
      \psi_t\vert_{\partial\mathbb{D}} \,:\, 
                 & \partial\mathbb{D} & \to & P_t\\
                 &   \rin              &             &\rin\\
                 &  X+iY              &\mapsto & (X-1)/(X+1)-2iYt/(X+1).
    \end{array}
  \]
  The above map is no longer a linear one, but it does not change 
  the degree of each algebraic curve on $ \mathbb{R}^2 $, i.e. on $\mathbb{C}$.
  In particular, it maps a line to a line.

   As in the case of ellipses, 
   the interior curve of $ B_{\psi_t} $ 
   is obtained from the interior curve of $ B $ by the transformation
   $ X+iY \mapsto (X-1)/(X+1)-2iYt/(X+1) $.
   
  Therefore, it follows that the ellipse determined by
  \eqref{eq:ghat} never degenerates to a single point or the empty set,
  since the ellipse that is the interior curve of a Blaschke product
  $ B $ of degree 3 never degenerates.
  In fact, the ellipse determined by \eqref{eq:ghat} and 
  one determined by \eqref{eq:gI} satisfy the equality
  \[
    t^2(z+\overline{z}-2)^2
    g_I\Big( \frac{-(z+\overline{z}+2)t+z-\overline{z}}{(z+\overline{z}-2)t}
          \Big)=g_I^{\psi_t}(z).
  \]

  Note that, $ g_I^{\psi_t} $ can be written as a composition of
  $ g_I $ and a linear transformation of two variables $ x $ and $ y $, 
  where $ z=x+iy $.
\end{remark}

The following result is obtained by the same argument as
Corollary \ref{cor:unique} in Section \ref{sec:DGM}.

\begin{proof}[Proof of Corollary \ref{cor:para}]
  As in the proof of Corollary \ref{cor:unique},
  for $ C_2 $ a $ 3$-inscribed ellipse in $ P_t $,
  let $ C'_2 $ be the ellipse obtained by mapping each point $ z=x+iy $
  on $ C_2 $  by the map $ (x,y)\mapsto \big((1+x)/(1-x),y/(t(1-x))\big) $.
  It now follows that $ C'_2 $ is an ellipse, 
  since the map does not change degree of curves 
  and the image is bounded as $ \{1\}\not\in C_2 $.

  From Theorem \ref{thm:Fra}, there exists a Blaschke product $ B $
  having $ C'_2 $ as its interior curve.
  Then, the interior curve of Blaschke-like map 
  associated with $ B $ and $ \varphi_t $ coincides with $ C_2 $.

  The converse follows from Theorem \ref{thm:para}.
\end{proof}

For the exterior curves, as in Proposition \ref{prop:d-1},
the following holds.

\begin{proposition}
  For a Blaschke-like map associated with a canonical Blaschke product
  $ B $ of degree $ d $ and the map $ \psi_t $,
  the exterior curve is an algebraic curve of degree at most $ d-1 $.
\end{proposition}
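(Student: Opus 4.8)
The plan is to follow the proof of Proposition~\ref{prop:d-1}, with the affine boundary map $\varphi_t\vert_{\partial\mathbb{D}}$ there replaced by $\psi_t\vert_{\partial\mathbb{D}}$. The point to keep in mind (cf.\ Remark~\ref{rem:para}) is that on $\partial\mathbb{D}$ the map $w=X+iY\mapsto(X-1)/(X+1)-2iYt/(X+1)$ extends to the projective transformation $\Psi\colon[X:Y:W]\mapsto[X-W:-2tY:X+W]$ of $\mathbb{P}^2$, whose matrix has determinant $-4t\neq0$; in particular $\Psi$ sends lines to lines, preserves tangency, and carries the conic $\partial\mathbb{D}$ onto the conic $P_t$ (because $\psi_t$ does and $\Psi=\psi_t$ on $\partial\mathbb{D}$). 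Fix a canonical Blaschke product $B$ of degree $d$; for $\lambda\in\partial\mathbb{D}$ let $\omega_1,\dots,\omega_d\in\partial\mathbb{D}$ be its preimages under $B$, so that the preimages of $\widehat{\lambda}=\psi_t(\lambda)\in P_t$ under $B_{\psi_t}$ are $\psi_t(\omega_1),\dots,\psi_t(\omega_d)$.

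Since $\Psi$ preserves tangency and $\Psi(\partial\mathbb{D})=P_t$, it maps the tangent line to $\partial\mathbb{D}$ at $\omega_k$ onto the tangent line to $P_t$ at $\psi_t(\omega_k)$, hence the intersection point of each pair of such tangents to $\partial\mathbb{D}$ onto the intersection point of the corresponding pair of tangents to $P_t$; so $\Psi$ carries the exterior curve of $B$ onto the exterior curve of $B_{\psi_t}$. To make this explicit (and to double-check it), I would write $P_t$ in general form $z^2-2z\overline{z}+\overline{z}^2-8t^2z-8t^2\overline{z}=0$, apply the tangent-line formula for a conic in general form already recorded in the proof of Proposition~\ref{prop:d-1} to get the tangent to $P_t$ at $\psi_t(\omega_k)$ -- simplifying with $\omega_k\overline{\omega_k}=1$, so that $\psi_t(\omega_k)+\overline{\psi_t(\omega_k)}$ and $\psi_t(\omega_k)-\overline{\psi_t(\omega_k)}$ reduce to $2(\omega_k-1)^2/(\omega_k+1)^2$ and $-4t(\omega_k-1)/(\omega_k+1)$, respectively -- solve the resulting $2\times2$ linear system for the intersection point $z$, and compare with $w_0=2\omega_1\omega_2/(\omega_1+\omega_2)$ (equivalently $\overline{w_0}=2/(\omega_1+\omega_2)$, cf.\ \eqref{eq:ww}) using $w_0+\overline{w_0}\pm2=2(\omega_1\pm1)(\omega_2\pm1)/(\omega_1+\omega_2)$ and \eqref{map:psi}. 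This should give
\[
  z=\frac{(\omega_1-1)(\omega_2-1)-2t(\omega_1\omega_2-1)}
         {(\omega_1+1)(\omega_2+1)}
   =\frac{(1-2t)w_0+(1+2t)\overline{w_0}-2}{w_0+\overline{w_0}+2}
   =\psi_t\vert_{\partial\mathbb{D}}(w_0),
\]
i.e.\ exactly $\Psi(w_0)$.

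Finally, since $w_0$ ranges over the exterior curve of $B$, which by Lemma~\ref{thm:d-1} lies on an algebraic curve of degree at most $d-1$, and since a projective transformation sends an algebraic curve of degree $n$ to one of degree $n$, the exterior curve of $B_{\psi_t}$ is an algebraic curve of degree at most $d-1$. I do not anticipate a genuine obstacle: the conceptual half of the argument is immediate once one observes that $\Psi$ is projective, and the computational half is routine, the only mild annoyance being the bookkeeping in the tangent-line and intersection computation. This can be sanity-checked on a sample: for $\omega_1=1$ and $\omega_2=i$ one gets $w_0=1+i$ and $z=-ti$, which is indeed the common point of the two tangents to $P_t$ at $\psi_t(1)=0$ and $\psi_t(i)=-1-2ti$.
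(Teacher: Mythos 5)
Your proof is correct and follows essentially the same route as the paper: transport the exterior curve of $B$ (of degree at most $d-1$ by Lemma \ref{thm:d-1}) by the projective extension of $\psi_t\vert_{\partial\mathbb{D}}$, which preserves the degree of algebraic curves. You in fact make explicit a step the paper's own proof of this proposition leaves implicit --- that the projective map $\Psi$ carries the intersection point of two tangents to $\partial\mathbb{D}$ to the intersection point of the corresponding tangents to $P_t$, the analogue of the computation the paper only carries out for $\varphi_t$ in Proposition \ref{prop:d-1} --- and your formulas and sample check are correct.
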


\begin{proof}
  From Lemma \ref{thm:d-1},
  for a canonical Blaschke product of degree $ d $,
  the exterior curve is an algebraic curve of degree at most $ d-1 $.
  Let $ g_E(w)=0 $ be a defining equation of the exterior curve 
  associated with $ B $.

  Then, a defining equation $ g_E^{\psi_t}(z)=0 $
  of a Blaschke-like map $ B_{\psi_t} $ is given as
  the condition that the numerator of the composition of 
  $ g_E(w) $ and 
  \begin{equation}\label{eq:1ji}
   w=\frac{-(z+\overline{z}+2)t+z-\overline{z}}{(z+\overline{z}-2)t} 
  \end{equation}
  is zero. 
  Hence, the degree of the equation is not more than $ d-1 $.
\end{proof}

Here, the simple question arises, 
"If the interior or exterior of the unit disk is mapped onto
 a domain whose boundary is a conic by a conformal map, 
 does the same property always hold?" 
In the next section, 
we give an example where the answer to this question is ``no''.


\section{Inside of elliptic disks}\label{sec:inside}\

Using Jacobi elliptic functions,
it is possible to construct a conformal map
that maps the unit disk to the elliptic domain.
See, for example,  Nehari \cite[Sec. VI]{nehari},
Schwarz \cite{schwarz} and Szeg\"o \cite{szego}
for details.

Let $ \mbox{sn}^{-1}(w,k) $ be the elliptic integral of the first kind
\[ 
   \mbox{sn}^{-1}(w,k)=\int_0^w\frac{d\omega}{\sqrt{(1-\omega^2)(1-k^2\omega^2)}},
\]
and let
\[
    K(k)=\int_0^1\frac{d\omega}{\sqrt{(1-\omega^2)(1-k^2\omega^2)}},\quad
    K'(k)=\int_1^{\frac1k}\frac{d\omega}{\sqrt{(\omega^2-1)(1-k^2\omega^2)}}.
\]
Here, for $ 0<p<1 $, we consider the following transformations
\begin{align*}
  & u(w)=u=\frac{w-1}{w+1},\quad  v(u)=v=c \cdot \mbox{sn}^{-1}(u,k),\\
  & x(v)=x=\sqrt{\frac{1+p}{1-p}}e^v,\quad
    z(x)=z=\frac{\sqrt{1-p^2}}2\Big(x+\frac1x\Big),
\end{align*}
where $ k $ and $ c $ are chosen such that 
$ \log\sqrt{(1+p)/(1-p)}=\pi(K(k))/(K'(k)) $ and \\*
$ \log\sqrt{(1+p)/(1-p)}=cK(k) $.

Set
\begin{align*}
  D_w &= \big\{ \vert w \vert <1, \ \mbox{Im} (w)>0 \big\}, \quad
          D_u = \big\{ \mbox{Re} (u)<0,\ \mbox{Im} (u)>0 \big\}, \\
  D_v &= \big\{-\log\sqrt{\frac{1+p}{1-p}}<\mbox{Re} (v)<0,\
               0< \mbox{Im} (v)<\pi \big\}, \\
  D_x &= \big\{1< \vert x \vert <\sqrt{\frac{1+p}{1-p}}, \ 
                 \mbox{Im} (x)>0 \big\}, \\
  D_z &= \big\{\vert z-\sqrt{1-p^2} \vert + \vert z+\sqrt{1-p^2} \vert <2, \
                \mbox{Im} (z)>0 \big\}.
\end{align*} 

Then, $ u,\,v,\,x $, and $ z $ are conformal maps of 
$ D_w $ to $ D_u $,
$ D_u $ to $ D_v $,
$ D_v $ to $ D_x $, and 
$ D_x $ to $ D_z $, respectively (see Figure \ref{pic:Jacobi}). 

\begin{figure}[htbp]
\centerline{\includegraphics[width=0.8\linewidth]{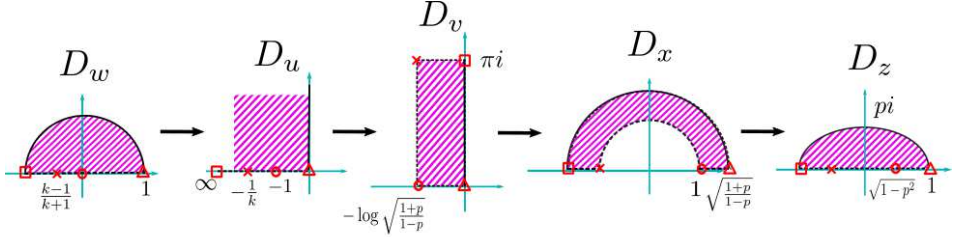}}
\caption{Conformal maps $ u,\,v,\,x$, and $ z $.}
\label{pic:Jacobi}
\end{figure}

Thus, the composition
\[  
   z=\gamma(w)=z\circ x\circ v\circ u (w)
\]
is a conformal map that maps the upper half of the unit disk
onto the upper half of the elliptic domain,
and the diameter $(-1,1) $ of the unit disk 
to the major axis $(-1,1)$ of the elliptic domain.
By Schwarz reflection principle, there exists a conformal map 
$ \widetilde{\gamma} $ from 
the unit disk $ \mathbb{D} $ onto the elliptic domain
$ \mathcal{E}_p=\big\{  \vert z-\sqrt{1-p^2} \vert 
 + \vert z+\sqrt{1-p^2} \vert <2\big\} $.

Then, the map 
$ B_{\widetilde{\gamma}} = \widetilde{\gamma}\circ B\circ \widetilde{\gamma}^{-1} $
is a Blaschke-like map on $ \mathcal{E}_p $.
\renewcommand{\arraystretch}{1}

\subsection{Computer experiments}\

Let $ p=0.800438\cdots$ and
$ B(w)=w(w-a)(w+a)/((1-\overline{a}w)(1+\overline{a}w)) $
with $ a=3/10 $.
Now, we consider a Blaschke-like map $ B_{\widetilde{\gamma}} $ constructed with
$ \widetilde{\gamma} $ obtained in the above procedure.

Figure \ref{pic:1} indicates the family of lines 
connecting the inverse images of points on the ellipse
$ \partial\mathcal{E}_{p} $ with $ p\approx0.800438$,
using Mathematica\footnote{{\tt https://www.wolfram.com/}}.
The envelope gives the interior curve with respect ot $ B_{\widetilde{\gamma}}$,
but it is not an ellipse.

\begin{figure}[htbp]
\centerline{\includegraphics[width=0.4\linewidth]{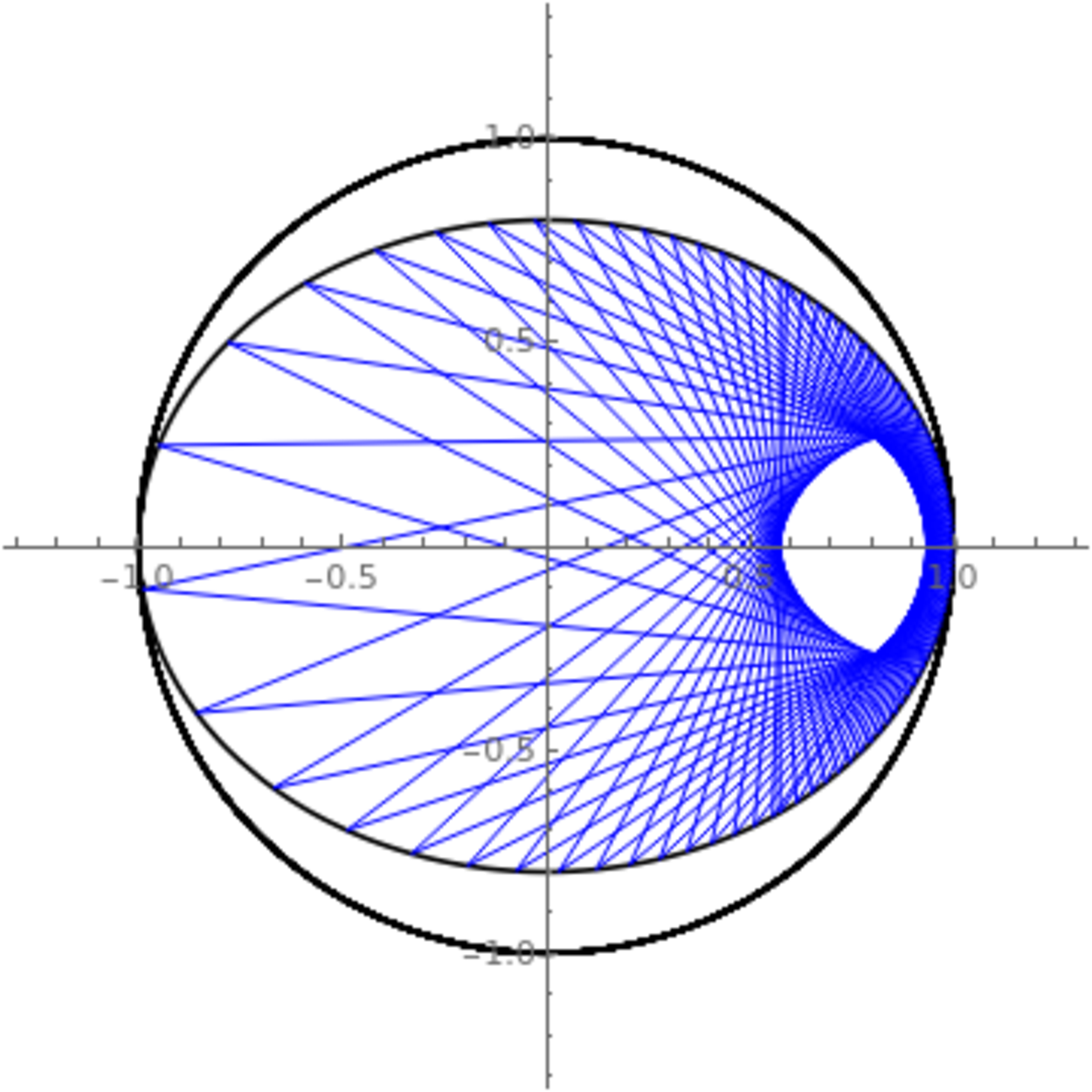}}
\caption{The interior curve with respect to
         Blaschke-like map for $ k=0.045 $ and the canonical Blaschke product
          $ B(w)=w(w^2-a^2)/(1-\overline{a}^2w^2) $ with $ a=3/10 $.}
\label{pic:1}
\end{figure}


\begin{appendix}
\section{The Cayley criterion}

For the two conics $ C $ and $ D $, Cayley  (\cite{cayley1,cayley3}) 
gives the condition so-called the ``Cayley criterion'' that
there exists an $ n $-sided polygon inscribed in $ C $ and 
circumscribed about $ D $.
Later, Griffiths and Harris prove this criterion 
using the algebraic geometrical method \cite{GH}.

Here, we find again the condition that there exists a triangle 
inscribed in the ellipse $ E_t $ and circumscribed about
$ E:  \vert z-f \vert + \vert z-g \vert =r $.

First, consider $ E_t $ as a curve in the projective space 
$ \mathbb{P}^2(\mathbb{R}) $ and put $A$ as the matrix representation.
More specifically, when the equation  $a_{11}x_1^2+2a_{12}x_1x_2+2a_{13}x_1x_3
+a_{22}x_2^2+2a_{23}x_2x_3+a_{33}x_3^2=0 $ is obtained by 
substituting $ z=x_1/x_3+i x_2/x_3 $ into the defining equation of the ellipse,
we can write $ A=(a_{ij}) $. For $ E_t $, we have
\[ 
  A=\begin{pmatrix}
      (t^2-1)^2 & 0 & 0\\ 0 & (t^2+1)^2 & 0 \\ 0 & 0 & -(t^2-1)^2
    \end{pmatrix}.
\]
Next, an ellipse with foci $ f $ and $ g $ is 
similarly represented in matrix form $ B=(b_{ij}) $ as
\begin{align*}
   b_{11} &= -4(r^2-(f_r-g_r)^2),\\  
   b_{12} &=4(f_i-g_i)(f_r-g_r),\\
   b_{13} &=2((f_r+g_r)r^2-(f_r-g_r)(f_r^2+f_i^2-g_r^2-g_i^2),\\
   b_{22} &=-4(r^2-(f_i-g_i)^2),\\
   b_{23} &=2((f_i+g_i)r^2-(f_i-g_i)(f_r^2+f_i^2-g_r^2-g_i^2),\\
   b_{33} &=r^4-2(f_r^2+f_i^2+g_r^2+g_i^2)r^2+(f_r^2+f_i^2-g_r^2-g_i^2)^2,
\end{align*}
where $ f=f_r+i f_i , g=g_r+i g_i $.

From the Cayley criterion, 
a triangle inscribed in $E_t $ and circumscribed about $E$ exists 
if the coefficient $c_2$ of the power series
$ \sqrt{\det(sA+B)}=\sum_{n=0}^{\infty}c_ns^n $
is zero.

Here, for $ F(s)=\det(sA+B) $,
\[ 
   (\sqrt{F})''=\frac{-(F')^2+2FF''}{4F\sqrt{F}}
\]
holds.
Using the above, the coefficient $ c_2=0 $ is equivalent to
$ (-(F')^2+2FF'') \vert _{s=0}=0 $. Eliminating the
non-essential factors, we have
\begin{align*}
  & t^4r^4
    +\big((1+t^4)t^2(fg+\overline{f}\overline{g}+2)
        -2(f\overline{f}+g\overline{g})t^4-(1+t^4)^2\big)r^2 \\
  & \quad
   +\big \vert (\overline{f}g-1)t^4-(g^2+\overline{f}^2-2)t^2
    +\overline{f}g-1\big \vert^2=0.
\end{align*}
Substituting $ f=f_1$ and $ g=f_2 $, the above equation 
is equivalent to equation \eqref{eq:r}.
This equation may have two solutions that satisfy $ r>0 $.
From the result of Lemma \ref{prop:1}, one of them gives
the desired Poncelet $ 3 $-inscribed ellipse.
The other is a solution that does not satisfy the assumption,
for example, the corresponding ellipse intersects the outer ellipse $ E_t $.
\end{appendix}

\bigskip

\textbf{Acknowledgments.}
This work was partially supported by JSPS KAKENHI
Grant Number JP19K03531.
We are grateful to Professor M. Lachance for his valuable comments 
on Section 2.



\begin{thebibliography}{DGM02}

\bibitem[Cay53]{cayley1}
A.~Cayley.
\newblock {XII. Note on the porism of the in-and-circumscribed polygon}.
\newblock {\em {The London, Edinburgh, and Dublin Philosophical Magazine and
  Journal of Science}}, 6(37):99--102, 1853.

\bibitem[Cay61]{cayley3}
A.~Cayley.
\newblock {XI. On the porism of the in-and-circumscribed polygon}.
\newblock {\em {Philosophical Transactions of the Royal Society}},
  151:225–239, 1861.

\bibitem[Cha46]{chapple}
W.~Chapple.
\newblock {An essay on the property of triangles inscribed in and circumscribed
  about given circles}.
\newblock {\em {Miscellanea Curiosa Mathematica}}, 4:117--124, 1746.

\bibitem[DGM02]{daepp}
U.~Daepp, P.~Gorkin, and R.~Mortini.
\newblock {Ellipses and finite Blaschke products}.
\newblock {\em {Amer. Math. Monthly}}, 109:785--794, 2002.

\bibitem[Fla08]{flatto}
L.~Flatto.
\newblock {\em Poncelet's Theorem}.
\newblock {Amer. Math. Soc.}, {Providence}, 2008.

\bibitem[Fra04]{frantz}
M.~Frantz.
\newblock {How conics govern M\"obius transformations}.
\newblock {\em {Amer. Math. Monthly}}, 111:779--790, 2004.

\bibitem[Fuj13]{fuji-cmft}
M.~Fujimura.
\newblock {Inscribed ellipses and Blaschke products}.
\newblock {\em Comput. Methods Funct. Theory}, 13:557--573, 2013.

\bibitem[Fuj17]{fuji-circum}
M.~Fujimura.
\newblock {Blaschke products and circumscribed conics}.
\newblock {\em {Comput. Methods Funct. Theory}}, 17:635--652, 2017.

\bibitem[Fus97]{fuss}
N.~Fuss.
\newblock {De quadrilateris quibus circulum tam inscribere quam circumscribere
  licet}.
\newblock {\em {Nova Acta Acad. Sci. Petrop}}, 10:103--125, 1797.

\bibitem[GH78]{GH}
P.~Griffiths and J.~Harris.
\newblock {On {C}ayley's explicit solution to {P}oncelet's porism}.
\newblock {\em {Enseign. Math. (2)}}, 24(1-2):31--40, 1978.

\bibitem[HW08]{HW}
E.~Haier and G.~Wanner.
\newblock {\em Analysis by its history}.
\newblock {Undergraduate Texts in Mathematics}. Springer, New York, 2008.

\bibitem[Mas13]{Inner}
J.~Mashreghi.
\newblock {\em Derivatives of Inner Functions}.
\newblock Springer-Verlag, New York, 2013.

\bibitem[Neh75]{nehari}
Z.~Nehari.
\newblock {\em Conformal Mapping}.
\newblock Dover, New York, 1975.

\bibitem[Pon66]{pon1}
J.-V. Poncelet.
\newblock {\em Trait\'e des propri\'et\'es projectives des figures. Tome I, II,
  2nd edition}.
\newblock {Les Grands Classiques Gauthier-Villars}. Jacques Gabay, Sceaux,
  1865,1866.

\bibitem[Sch69]{schwarz}
H.~A. Schwarz.
\newblock {\"Uber einige Abbildungsaufgaben}.
\newblock {\em {Journal f\"{u}r die Reine und Angewandte Mathematik}},
  70:105--120, 1869.

\bibitem[ST16]{taba}
R.~Schwartz and S.~Tabachnikov.
\newblock {Centers of mass of Poncelet polygons, 200 yearts after}.
\newblock {\em {The Mathematical Intelligencer}}, 38(2):29--34, 2016.

\bibitem[{Sze}50]{szego}
G.~{Szeg\"o}.
\newblock {Conformal mapping of the interior of an ellipse onto a circle}.
\newblock {\em {Amer. Math. Monthly}}, 57:474--478, 1950.

\end{thebibliography}
\end{document}